\documentclass[a4paper]{article}
\usepackage{setspace}
\setstretch{1.5}
\usepackage{setspace}
\usepackage{amsthm}
\usepackage{calrsfs}
\usepackage[a4paper , lmargin = {3cm} , rmargin = {3cm} , tmargin = {2.5cm} , bmargin = {2.5cm} ]{geometry}
\usepackage{amssymb}
\author{Karim Adiprasito}
\DeclareMathAlphabet{\mathpzc}{OT1}{pzc}{m}{it}

\newtheorem*{theorem*}{theorem}
\newtheorem{theorem}{Theorem}[section]
\newtheorem{Cor}[theorem]{Corollary}
\newtheorem{lemma}[theorem]{Lemma}
\newtheorem{Problem}{Problem}

\newcommand{\conv}{\textsc{conv} }
\newcommand{\e}{\varepsilon}
\newcommand{\R}{\mathbb{R}}

\newcommand{\I}{\textrm{I} }
\newcommand{\Bs}{\mathcal{B}_{\circ}}
\newcommand{\B}{\mathcal{B}}
\renewcommand{\S}{\mathrm{S}}

\newcommand{\bd}[1]{\textsc{bd}(#1) }

\newcommand{\relintx}[1]{\textsc{relint}(#1)}
\renewcommand{\d}{\mathpzc{d} }

\renewcommand{\epsilon}{\varepsilon }
\newcommand{\dP}{\d_{PH} }
\newcommand{\card}{\textsc{card} }

\theoremstyle{remark}
\theoremstyle{definition}

\begin{document}
\title{Infinite curvature on typical convex surfaces}
\author{Karim ADIPRASITO\thanks{The final preparation of this paper was supported by the DFG within the 
research training group ``Methods for Discrete Structures'' (GRK1408). It contains one of the results of the authors diploma thesis written at TU Dortmund, Germany.}, Freie Universit\"{a}t Berlin\\ adiprasito@mi.fu-berlin.de}
\date{\today}
\maketitle
\bfseries
\mdseries
\begin{abstract} Solving a long-standing open question of Zamfirescu, we will show that typical convex surfaces contain points of infinite curvature in all tangent directions. To prove this, we use an easy curvature definition imitating the idea of Alexandrov spaces of bounded curvature, and show continuity properties for this notion.\\ 
\end{abstract}
In \cite{K}, V. Klee proved that a typical convex body has a {\it smooth} (i.e. a $C^1$-) boundary. In \cite{G}, P. Gruber added that while the boundary of a typical convex body is $C^1$, it is not $C^2$, and T. Zamfirescu complemented the picture by proving detailed theorems on the curvature of typical convex bodies (\cite{curv0, nonex}). Briefly said, following this research, we know that typical convex surfaces are  smooth, i.e. they have a $C^1$ differentiable structure, and that the curvature, albeit it does not typically exist everywhere, can take values 0 and $\infty$ only. (We refer the reader to \cite{convex} or the broader surveys \cite{bccsur} and \cite{G2} for further information.)
A question that arises naturally: is this result sharp? Do points of curvature $0$ and/or $\infty$ exist on typical convex surfaces? We will show that indeed, that is true. It is already known that the curvature typically attains the value $0$ (\cite{curv0}), but it was still a long-standing open question whether the value $\infty$ is typically attained (\cite{convex}). The main result will answer this question, thus completing the picture:
\begin{theorem}
\label{main}
On typical convex surfaces, there exists at least one umbilical point of infinite curvature.
\end{theorem}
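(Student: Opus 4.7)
The plan is a standard Baire category argument on the complete metric space $(\mathcal{K}, \mathrm{d}_H)$ of convex bodies equipped with the Hausdorff metric. Using the curvature-comparison notion $\kappa(K, p, v)$ constructed in the body of the paper (modelled on the Alexandrov definition), I consider, for each $n \in \mathbb{N}$,
\[
\mathcal{U}_n \;=\; \bigl\{\, K \in \mathcal{K} \,:\, \exists\, p \in \partial K \text{ with } \kappa(K, p, v) > n \text{ for every tangent direction } v \,\bigr\},
\]
and aim to show each $\mathcal{U}_n$ is open and dense. Baire's theorem then gives $\bigcap_n \mathcal{U}_n$ dense and $G_\delta$, and any $K$ in this intersection carries a boundary point where $\kappa = \infty$ in every tangent direction, which is the umbilical point of infinite curvature required by the theorem.

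Openness of $\mathcal{U}_n$ should follow from the continuity of $\kappa$ advertised in the abstract: a witness of the lower bound $\kappa(K, p, v) > n$ at $p$ persists under sufficiently small Hausdorff perturbations of $K$, with the witness $p$ moving continuously to a nearby boundary point of the perturbed body. Compactness of the tangent sphere at $p$ should let one replace the universal quantifier over $v$ by a finite one over a sufficiently fine net, so that membership in $\mathcal{U}_n$ is certified by finitely many scale-dependent geometric inequalities, each robust under small perturbations.

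For density, given $K \in \mathcal{K}$ and $\varepsilon > 0$, I construct $K' \in \mathcal{U}_n$ with $\mathrm{d}_H(K, K') < \varepsilon$ by carving a single umbilical cusp into $K$. After approximating $K$ by a strictly convex $C^1$ body (permissible since these are typical by Klee's theorem), pick $p \in \partial K$ and choose coordinates so that $p$ is the origin, the tangent plane at $p$ is $\{z = 0\}$, and $K \subset \{z \le 0\}$ locally. Let $u(x,y) = \max\{\, z : (x,y,z) \in K\,\}$ be the concave upper envelope of $K$, let $\psi(x,y) = (x^2 + y^2)^{3/4}$, a convex $C^1$ function with an umbilical cusp of infinite curvature at the origin, and set $h := u - c\,\psi$ for a small parameter $c > 0$. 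Then $h$ is concave as a sum of concave and $-\,c$ times convex, so $C := \{(x,y,z) : z \le h(x,y)\}$ is convex, and
\[
K' \;:=\; K \cap C
\]
is a convex body contained in $K$ that differs from $K$ only by a layer of thickness at most $c \max_{(x,y) \in \pi(K)} \psi(x,y)$, so that $\mathrm{d}_H(K, K') \to 0$ as $c \to 0$. Since $u(0,0) = 0$ and $\nabla u(0,0) = 0$, the upper boundary of $K'$ near the origin is the graph of $h$, and $|h(x,y)| \ge c\,(x^2+y^2)^{3/4}$ drops faster than any quadratic along every tangent direction; so the origin is an umbilical point of infinite curvature of $K'$, and $K' \in \mathcal{U}_n$ for every $n$.

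The main obstacle is the openness step. Mere lower semicontinuity of $\kappa(K, p, v)$ in $K$ is not sufficient, because the universal quantifier over $v$ interacts badly with lower semicontinuity (infima of lower semicontinuous functions need not be lower semicontinuous). The continuity statement established in the paper must therefore deliver a uniform bound, controlling $\kappa(K, p, v)$ in $v$ via a finite-scale comparison datum that depends only on the Hausdorff distance to $K$. Establishing this uniformity is the technical heart of the argument; once it is in place, the cusp construction together with Baire's theorem finish the proof routinely.
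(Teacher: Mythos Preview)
Your Baire-category skeleton is the right one, and your density construction via the $C^1$ cusp $\psi(x)=|x|^{3/2}$ is a perfectly good substitute for the paper's cruder spike $K_\theta=\conv(K\cup\{x+\theta\tau\})$. The openness issue you flag is also real, and it is handled in the paper essentially as you anticipate, through a quantitative hat-stability statement (the paper's Theorem~\ref{cone} and Lemma~\ref{cone2}).

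The genuine gap is elsewhere. Membership in $\bigcap_n \mathcal{U}_n$ only tells you that for each $n$ there is \emph{some} point $p_n\in\partial K$ with $\kappa_i(p_n)>n$; it does not by itself produce a single boundary point with $\kappa=\infty$. You assert this conclusion but give no argument. A subsequential limit $p$ of the $p_n$ exists by compactness of $\partial K$, but you would then need a lower-semicontinuity statement of the form $\kappa_i(p)\ge\limsup \kappa_i(p_{n_k})$, and no such statement is available for the classical lower curvature. If you try to rescue this by passing to the hat language, you run into a second problem: a witness for $\mathcal U_n$ is a hat $C_{\tau_n}(p_n,\varepsilon_n,\delta_n)$ with $\varepsilon_n<1/n$, but nothing pins down the angles $\delta_n$, and if $\delta_n\to 0$ the hats degenerate and the limit carries no information.

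The paper is organised precisely to avoid this trap. It does not filter by ``$\exists\,p$ with $\kappa_i(p)>n$''. Instead it fixes once and for all a decreasing sequence $(a_n)$ of angles and filters by the \emph{order of curvature} $\mathsf K(K)$: the largest $m$ for which one can find a \emph{single} pair $(x,\tau)$ and indices $i_1<\cdots<i_m$ such that every $C_\tau(x,1/i_k,a_{i_k})$ is simultaneously a hat of $K$. The sets $\{\tau:\mathfrak K_{1/i,a_i,K}(\tau)=0\}$ are closed in the compact sphere $\S_1$ by continuity of the curvature indicator (Lemma~\ref{conti}), and by construction any finite subcollection indexed by $\I_{K,(a_n)}$ has nonempty intersection; so if $\mathsf K(K)=\infty$, Heine--Borel yields a common $\tau$ (hence a common tip $x$) for which hats of arbitrarily small radius exist, and \emph{that} gives the umbilical point of infinite curvature (Lemma~\ref{infty}). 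The Baire argument then shows $\{\mathsf K\le m\}$ is nowhere dense for each $m$ (Lemma~\ref{final}).

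In short: your $\mathcal U_n$ are the wrong open sets. Replace them by sets encoding a nested, scale-coherent family of hats with pre-assigned angles, and supply the finite-intersection-property step; then your outline becomes the paper's proof.
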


The second section will consist of the definition of curvature bounds for convex surfaces, for which we will state a continuity property, and relate it to the classic notion of curvature.\\
In section 3 we will further develop a curvature ''indicator'' which is continuous and tells us if or if not the curvature of a convex surface is bounded with respect to aforementioned notion of curvature bounds, and state an elementary property for certain sets in Euclidean space.\\
The continuity property is one reason why we use our own curvature definition instead of the comparision of triangles used in the theory of Alexandrov spaces: Alexandrov spaces of curvature bounded above, for example, do not form a stable class with respect to Gromov-Hausdorff Topology.
Section 4 and 5 will then conclude with the proof of the central theorem of this paper.
\section{Preliminaries}
Baire Categories form an important tool in several areas of mathematics, for example functional analysis. Their use is based on Baire's theorem: 
\begin{theorem}\emph{(Baire)}
\label{baire}
Let $X$ be a complete metric space. It enjoys the following property:\\
Any subset $Y$ of $X$ which is of ''first category'' in $X$, i.e. a countable union of nowhere dense sets, has dense complement in $X$.\\
\end{theorem}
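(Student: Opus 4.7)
The plan is to prove Baire's theorem by the classical nested-ball construction, showing that the complement of a first-category set $Y$ meets every nonempty open set. Write $Y = \bigcup_{n \in \N} Y_n$ with each $Y_n$ nowhere dense; equivalently, each $\overline{Y_n}$ has empty interior. Let $U \subseteq X$ be an arbitrary nonempty open set; the goal is to produce a point of $U$ not contained in any $Y_n$, which will establish that $X \setminus Y$ is dense.

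Since $\overline{Y_1}$ has empty interior, $U \setminus \overline{Y_1}$ is a nonempty open subset of $X$, so I can choose a closed ball $\overline{B(x_1, r_1)} \subseteq U \setminus \overline{Y_1}$ with $r_1 < 1$. Proceeding inductively, suppose a closed ball $\overline{B(x_n, r_n)}$ has already been chosen with $r_n < 1/n$ and $\overline{B(x_n,r_n)} \cap \overline{Y_k} = \emptyset$ for $k \le n$. Because $\overline{Y_{n+1}}$ has empty interior, the open ball $B(x_n, r_n)$ is not contained in $\overline{Y_{n+1}}$, so $B(x_n, r_n) \setminus \overline{Y_{n+1}}$ is a nonempty open set; inside it, pick a closed ball $\overline{B(x_{n+1}, r_{n+1})}$ with $r_{n+1} < 1/(n+1)$ and contained in this difference. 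This yields a descending sequence of closed balls with radii tending to zero, each avoiding the corresponding $\overline{Y_k}$.

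The sequence of centers $(x_n)$ is Cauchy because $x_m \in \overline{B(x_n, r_n)}$ for all $m \geq n$, so $d(x_m, x_n) \leq r_n < 1/n$. By completeness of $X$, it converges to some $x \in X$, and since the balls are closed and nested, $x \in \bigcap_n \overline{B(x_n, r_n)}$. In particular $x \in U$, and for every $n$ one has $x \notin \overline{Y_n} \supseteq Y_n$, hence $x \in U \setminus Y$. Since $U$ was arbitrary, $X \setminus Y$ intersects every nonempty open set, which is precisely the density claim.

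The only real obstacle is the inductive step, which must simultaneously (i) shrink the radii to force convergence, (ii) stay inside the previous open ball so that the intersection is nonempty, and (iii) avoid the closure of the next $Y_n$. All three are easily arranged because in a metric space every nonempty open set contains a closed ball of arbitrarily small positive radius. Completeness of $X$ is used exactly once, to extract the limit point from the Cauchy sequence of centers, and this is the indispensable hypothesis of the theorem.
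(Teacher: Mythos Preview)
Your argument is the standard nested closed-ball proof of Baire's theorem and is correct as written; the inductive construction, the Cauchy property of the centers, and the use of completeness are all handled properly. Note, however, that the paper does not actually prove this statement: it is quoted without proof as a classical preliminary fact and then used later, so there is no proof in the paper to compare your approach against.
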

Further, any topological space $X$ which enjoys the property of theorem \ref{baire} is called a ''Baire space''. The complement of a set of first Baire category in a Baire space is called ''residual''.\\
We say that ''typical'' elements of a Baire space have property $P$ if those elements not enjoying $P$ form a set of first category.
We will from now on consider the Euclidean space $\R^d$ of some dimension $d>1$. It is known that the set $\B$ of convex bodies (compact convex subsets of our Euclidean space with nonempty interior) together with the Pompeiu-Hausdorff Metric $\dP$ forms a Baire space.\\
Furthermore, it is known that the set of strictly convex bodies $\Bs$ (that is, convex bodies whose boundary contains no straight line segment) and the set of convex bodies with smooth boundary (that is, convex bodies whose boundaries are at least $C^1$) lie residually in $\B$. This is important, because obviously:
\begin{lemma}
\label{trans}
Let $X$ be any Baire space, and $X'$ a residual subset of $X$. Then $X'$ with the induced topology from $X$ is a Baire space. If $X''$ is any residual set in $X'$, it is also a residual subset of $X$.
\end{lemma}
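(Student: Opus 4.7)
The plan is to reduce both statements of the lemma to a single auxiliary observation: when $X'$ is a residual (hence dense, by Theorem \ref{baire}) subset of a Baire space $X$, any subset $A \subseteq X'$ that is nowhere dense in $X'$ satisfies that its closure $\overline{A}^X$ in $X$ is also nowhere dense in $X$. I would verify this by contradiction: if $\overline{A}^X$ contained a nonempty open set $U$ of $X$, then by density of $X'$ in $X$ the trace $U \cap X'$ would be a nonempty open subset of $X'$ contained in $\overline{A}^X \cap X' = \overline{A}^{X'}$, contradicting the assumption that $A$ is nowhere dense in $X'$.

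For the first assertion of the lemma, I would take an arbitrary sequence $(A_n)_{n \in \N}$ of nowhere dense subsets of $X'$ and push it to the ambient space: by the observation above each $\overline{A_n}^X$ is nowhere dense in $X$, while $X \setminus X'$ is by hypothesis a countable union of nowhere dense sets in $X$. Thus $\bigcup_n \overline{A_n}^X \cup (X \setminus X')$ is of first category in $X$, and Theorem \ref{baire} guarantees that its complement, which is contained in $X'$ and in $X' \setminus \bigcup_n A_n$, is dense in $X$. A second appeal to density of $X'$ in $X$ promotes this to density of $X' \setminus \bigcup_n A_n$ in $X'$, which is precisely the Baire property for the induced topology on $X'$.

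The second assertion follows from the same observation applied term by term: writing $X' \setminus X'' = \bigcup_n A_n$ with each $A_n$ nowhere dense in $X'$, we see that $X' \setminus X'' \subseteq \bigcup_n \overline{A_n}^X$ is of first category in $X$, and combining this with the first-category set $X \setminus X'$ exhibits $X \setminus X'' = (X \setminus X') \cup (X' \setminus X'')$ as first category in $X$, i.e.\ $X''$ as residual in $X$.

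I do not anticipate a genuine obstacle. The only point requiring any care is the closure-transfer observation, and it rests entirely on the fact that residual subsets of a Baire space are automatically dense, which is itself an immediate corollary of Theorem \ref{baire}. Everything else reduces to bookkeeping with countable unions of nowhere dense sets.
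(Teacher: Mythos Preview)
Your argument is correct. The auxiliary closure-transfer observation is sound (the identity $\overline{A}^X\cap X'=\overline{A}^{X'}$ is the standard description of subspace closures, and the density of $X'$ supplies the needed nonempty trace), and both assertions of the lemma follow from it exactly as you describe. One minor remark: in the first part, once you know the complement of your first-category union is dense in $X$ and contained in $X'$, its density in $X'$ is automatic for any subset $X'$, without a further appeal to the density of $X'$ in $X$; but this does no harm.

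As for comparison with the paper: there is nothing to compare. The paper introduces the lemma with the word ``obviously'' and gives no proof at all, treating it as a standard folklore fact about Baire spaces. Your write-up supplies precisely the routine verification the paper elides.
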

\bigskip
Now let us recall what is meant by curvature (see \cite{B}): Take a convex body $K$ with smooth boundary $\bd{K}$, and take a point $x$ in $\bd{K}$, $\nu$ the inward normal to $\bd{K}$ in $x$ and $\tau$ some tangential vector to $\bd{K}$ in $x$. Now let $Q$ be the set of nonnegative linear combinations of the form $x+\lambda\nu+\mu\tau$, and let $z\neq x$ be a random point in $\bd{K}\cap Q$. Define the radius $r_z$ of the circle containing $x$ and $z$ and which has center somewhere in $\{x+\lambda \nu|\lambda\geq 0\}$. We define the ''lower'' (resp. ''upper'') ''curvature radius'' in direction $\tau$:
\begin{eqnarray*}
\rho^\tau_i (x)=\liminf_{z \rightarrow x} r_z& &\rho^\tau_s (x)=\limsup_{z \rightarrow x} r_z
\end{eqnarray*}
and the ''lower'' (resp. ''upper'') ''curvature'' in direction $\tau$
\begin{eqnarray*}
\kappa^\tau_i (x)=\frac{1}{\rho^\tau_s (x)}& &\kappa^\tau_s (x)=\frac{1}{\rho^\tau_i (x)} 
\end{eqnarray*}
If the values coincide, we say the curvature ''exists in direction $\tau$'', and denote it by $\kappa^\tau(x)$. We are interested in the curvature in all directions at once, thus we define the ''lower'' (resp. ''upper'') ''curvature''
\begin{eqnarray*}
\kappa_i (x)=\inf_{\tau}\kappa^\tau_i (x) & &\kappa_s (x)=\sup_{\tau} \kappa^\tau_s(x)
\end{eqnarray*}
and say the curvature exists in $x$ iff $\kappa_i (x)=\kappa_s(x)$. In classical differential geometry, this curvature exists at ''umbilical'' points only. But for our present purpose, this notion is suitable.  
\\
The smoothness properties of convex bodies have been thoroughly studied by Klee, Gruber and Zamfirescu. An initial theorem was proved by Klee (\cite{K}), and later reproved by Gruber (\cite{G}):
\begin{theorem}\emph{(Klee)}
\label{Klee}
Typical convex surfaces (boundaries of convex bodies) are \textit{smooth}.
\end{theorem}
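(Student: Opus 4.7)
The plan is to exhibit the complement of the set of smooth bodies as a countable union of closed, nowhere dense subsets of $(\B,\dP)$, and then invoke Baire's theorem. Recall that a convex body $K$ has $C^1$ boundary precisely when every boundary point admits a unique outward unit normal, so non-smoothness at a point is quantified by the angular width of the normal cone there. I would therefore set
\[
A_n := \bigl\{\,K\in\B \;:\; \exists\, x\in\bd{K}\ \text{with outward unit normals}\ \nu_1,\nu_2\ \text{satisfying}\ \angle(\nu_1,\nu_2)\geq 1/n\,\bigr\},
\]
so that the collection of non-smooth convex bodies is exactly $\bigcup_{n\geq 1}A_n$. It then suffices, by definition of first category, to prove each $A_n$ closed and nowhere dense.

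Closedness follows from a Blaschke-type selection argument. If $K_i\to K$ in $\dP$ with witnesses $x_i\in\bd{K_i}$ and unit normals $\nu_{1,i},\nu_{2,i}$, I would pass to subsequences so that $x_i\to x$ and $\nu_{j,i}\to\nu_j$. Because $\bd{\cdot}$ is continuous on $\B$ under $\dP$-convergence, $x\in\bd{K}$; and the supporting hyperplane inequality $\langle y-x_i,\nu_{j,i}\rangle\leq 0$ valid for all $y\in K_i$ passes to the limit for every $y\in K$ (such $y$ being a $\dP$-limit of points in $K_i$). Since the angle between unit vectors is continuous, the limiting configuration still satisfies $\angle(\nu_1,\nu_2)\geq 1/n$, hence $K\in A_n$.

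For nowhere density, since $A_n$ is closed it suffices to show $\B\setminus A_n$ is dense. Given $K\in\B$ and $\e>0$, I would consider the outer parallel body $K_\e := K + \e B^d$ where $B^d$ is the closed Euclidean unit ball. Each boundary point $y$ of $K_\e$ has a unique nearest point $\pi_K(y)\in K$ at distance exactly $\e$, so the outward unit normal $\nu(y)=(y-\pi_K(y))/\e$ is well defined and unique; in particular $K_\e\notin A_n$ for any $n$. Since $\dP(K,K_\e)=\e$, we can approximate $K$ arbitrarily well by bodies outside $A_n$, giving density.

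Combining the two facts, $\bigcup_n A_n$ is of first category in $\B$, so its complement---the $C^1$ convex bodies---is residual, which is exactly the claim. The main delicate point is the closedness step: one has to verify that limits of supporting normals at limits of points still genuinely support the limit body, which reduces to the well-behaved convergence of supporting hyperplanes of Hausdorff-convergent convex bodies. The smoothing approximation via parallel bodies and the Baire-theoretic bookkeeping are otherwise routine.
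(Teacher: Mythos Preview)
The paper does not supply its own proof of this theorem: it is quoted as a classical result of Klee, later reproved by Gruber, and is used only as background. So there is no in-paper argument to compare against.

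That said, your proposal is a correct and standard proof. The decomposition into the sets $A_n$ quantifying the angular spread of the normal cone, the closedness via a Blaschke-selection/limit-of-supporting-hyperplanes argument, and the density of smooth bodies via outer parallel bodies $K+\e B^d$ are exactly the ingredients in the classical treatments (Klee's original and Gruber's simplification). One small point worth tightening in a formal write-up: the step ``$x_i\in\bd{K_i}$, $x_i\to x$ implies $x\in\bd{K}$'' relies on the fact that for convex bodies Hausdorff convergence of the bodies entails Hausdorff convergence of their boundaries; this is true but deserves an explicit citation or one-line justification, since Hausdorff convergence alone (for non-convex compacta) does not force boundary convergence. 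With that caveat, the argument is complete.
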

Investigating the second-order differential structure of $\B$, it turned out that while on typical convex surfaces the curvature exists a.e. (A. D. Alexandrov and Zamfirescu), it does not exist everywhere (Gruber and Zamfirescu). The following results have been found:\\
\begin{theorem}\emph{(Zamfirescu, \cite{curv0})}
For typical convex bodies $K$, in all boundary points $x$ of $K$, and in all tangent directions $\tau$ in $x$ at $K$, the lower 
curvature $\kappa^\tau_i (x)$ is zero, or the upper curvature $\kappa^\tau_s (x)$ is $\infty$.\\
\end{theorem}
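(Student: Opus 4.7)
My plan is a Baire category argument. By Theorem~\ref{Klee} and Lemma~\ref{trans}, I may restrict attention to the residual class of $C^1$ convex bodies, on which the inward normal $\nu$ and the tangent directions $\tau$ are well defined. The negation of the conclusion defines
\[\mathcal{E} := \{K \in \B : \exists\, x \in \bd{K},\ \exists\, \tau \text{ tangent at } x \text{ with } \kappa^\tau_i(x) > 0 \text{ and } \kappa^\tau_s(x) < \infty\}.\]
Translating to the curvature radii $\rho^\tau_i = 1/\kappa^\tau_s$, $\rho^\tau_s = 1/\kappa^\tau_i$ and unfolding the $\liminf / \limsup$ definitions, one writes $\mathcal{E} \subseteq \bigcup_{n,m,k \in \N} B_{n,m,k}$, where
\[B_{n,m,k} := \left\{K : \exists\, x \in \bd{K},\ \exists\, \tau,\ \forall z \in \bd{K} \cap Q(x,\tau) \cap B(x, 1/k) :\ 1/m \leq r_z \leq n\right\}\]
with $Q(x,\tau) := \{x + \lambda \nu + \mu \tau : \lambda, \mu \geq 0\}$ the closed quarter-plane from the definition of $r_z$. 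It suffices to show each $B_{n,m,k}$ is nowhere dense in $\B$.

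For closedness (after a harmless adjustment of $k$), take $K_j \to K$ in $\dP$ with witnesses $(x_j, \tau_j)$ and extract a subsequence with $x_j \to x \in \bd{K}$ and $\tau_j \to \tau$; continuity of the normal map on the smooth class guarantees $\tau$ is tangent to $\bd{K}$ at $x$. For any $z$ in the open set $\bd{K} \cap Q(x,\tau) \cap B(x, 1/k)$, pick $z_j \to z$ in $\bd{K_j} \cap Q(x_j,\tau_j) \cap B(x_j, 1/k)$; continuity of the chord-circle construction in its parameters transports $r_{z_j} \in [1/m, n]$ to $r_z \in [1/m, n]$, showing $\overline{B_{n,m,k}} \subseteq B_{n,m,k+1}$.

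The crucial step is density of the complement. Given $K \in \B$ and $\delta > 0$, I first take a polytope $P$ with $\dP(K, P) < \delta/2$ and then smooth $P$ to a $C^1$ body $K'$ whose boundary is partitioned into a fine alternating pattern of two patch types: \emph{flat} patches, on which every principal curvature stays below $1/(2n)$, and \emph{curved} patches, on which every principal curvature exceeds $2m$. I arrange both types to occur within distance $1/(2k)$ of every $x \in \bd{K'}$, and I arrange the partition so that for every $(x, \tau)$ the curve $\bd{K'} \cap Q(x,\tau)$ enters one of the two patch types within $B(x, 1/k)$, yielding a $z$ with $r_z < 1/m$ or $r_z > n$. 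A compactness argument on the unit tangent sphere bundle sharpens this pointwise escape to a uniform margin $\eta > 0$, which is preserved under $\dP$-perturbations of $K'$ smaller than a function of $\eta$, giving an open neighborhood of $K'$ disjoint from $B_{n,m,k}$.

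The main obstacle is this construction of $K'$ and the verification that $\bd{K'} \cap Q(x,\tau)$ meets a patch of the desired type within $B(x, 1/k)$ for every $(x, \tau)$. The primary geometric difficulty is Euler's formula: within a single smooth patch, intermediate normal curvatures interpolate between the two principal curvatures, so a patch whose principal curvatures straddle $[1/n, m]$ would fail in some direction. I avoid this by keeping both principal curvatures on the same side of $[1/n, m]$ within every patch, with care taken at the $C^1$ transitions to ensure the escape property survives.
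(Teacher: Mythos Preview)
The paper does not prove this theorem: it is quoted as a background result of Zamfirescu with a reference to \cite{curv0}, and no argument for it appears in the present paper. So there is no ``paper's own proof'' to compare your attempt against; I can only assess your proposal on its merits and against the standard argument.

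Your overall strategy (Baire category via a countable decomposition $\mathcal{E}\subseteq\bigcup B_{n,m,k}$ into nowhere dense sets) is correct and is precisely the approach used in \cite{curv0}. The genuine gap is the part you yourself flag as the ``main obstacle'': the alternating flat/curved patch construction of $K'$ is never actually carried out, and the verification that every planar section $\bd{K'}\cap Q(x,\tau)$ hits a suitable patch within $B(x,1/k)$ is only asserted. This is the heart of the matter, and it is considerably harder than what is required. The standard route avoids it entirely: polytopes are dense in $\B$, and for any polytope $P$, any $x\in\bd{P}$, any inward normal $\nu$ at $x$ and any $\tau\perp\nu$, the section $\bd{P}\cap Q$ is locally a straight segment from $x$. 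Writing $z=x+\mu\tau+\lambda\nu$ on this segment one has $\lambda=c\mu$ for some fixed $c\ge 0$, whence $r_z=(\mu^2+\lambda^2)/(2\lambda)$ equals $\infty$ if $c=0$ and tends to $0$ as $z\to x$ if $c>0$. Either way the two-sided bound $1/m\le r_z\le n$ is violated arbitrarily close to $x$, so no polytope lies in any $B_{n,m,k}$, and empty interior follows immediately. No patchwork, no Euler-formula issue, no uniform-margin compactness step.

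There is also a smaller technical issue in your closedness step: you invoke ``continuity of the normal map on the smooth class'', but the $\dP$-limit $K$ of bodies in $B_{n,m,k}$ need not be smooth, so this does not apply as stated. The fix is to define $B_{n,m,k}$ for arbitrary $K\in\B$ by allowing $\nu$ to be \emph{any} inward normal at $x$; then a subsequential limit of witnesses $(x_j,\nu_j,\tau_j)$ is again a witness, since limits of supporting half-spaces are supporting half-spaces.
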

Using a theorem due to Alexandrov, Zamfirescu concluded the remarkable
\begin{Cor}\emph{(\cite{curv0})}
\label{exist}
For typical convex bodies $K$, $\bd{K}$ is smooth and furthermore and has umbilical points of curvature 0 almost everywhere, i.e.

$$\kappa^\tau(x) = 0  \quad {\rm a.e.}$$

in all tangent directions $\tau$ at $x$.
\end{Cor}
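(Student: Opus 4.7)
The plan is to combine the preceding dichotomy theorem of Zamfirescu with Alexandrov's classical second-differentiability theorem for convex functions. By Theorem \ref{Klee} and Lemma \ref{trans}, smoothness of $\bd{K}$ is already a typical property, so the first assertion is immediate; the real task is the pointwise vanishing of $\kappa^\tau$ almost everywhere, together with the umbilicity.

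First I would localize: for a typical (hence smooth) $K$ and a point $x \in \bd{K}$, parameterize a neighborhood of $x$ in $\bd{K}$ as the graph of a convex function $f \colon U \to \R$ with $U \subset \R^{d-1}$ open and $x$ corresponding to some $x_0 \in U$. Alexandrov's theorem furnishes, for Lebesgue-a.e. $x_0 \in U$, a positive semidefinite quadratic form $Q_{x_0}$ with
\[
f(x_0 + h) = f(x_0) + \langle \nabla f(x_0), h \rangle + \frac{1}{2} Q_{x_0}(h) + o(|h|^2).
\]
At any such second-order Alexandrov point, a direct computation using the osculating-circle definition of the radius $r_z$ shows $\kappa^\tau_i(x) = \kappa^\tau_s(x) = Q_{x_0}(\tau)$ for every unit tangent vector $\tau$, so that $\kappa^\tau(x)$ exists and is finite in every tangent direction simultaneously.

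Now I invoke the dichotomy from the previous theorem: for typical $K$, at every boundary point and every direction, either $\kappa^\tau_i(x) = 0$ or $\kappa^\tau_s(x) = \infty$. At a point where Alexandrov's expansion holds, finiteness rules out the second alternative, forcing $\kappa^\tau_i(x) = 0$; and since upper and lower curvatures there agree, $\kappa^\tau(x) = 0$ in every direction. This exhibits $x$ as an umbilical point of curvature zero. Since the set of Alexandrov points has full $(d-1)$-dimensional Hausdorff measure on $\bd{K}$, the corollary follows.

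The main obstacle I foresee is less logical than technical: one must verify that the osculating-circle limits defining $\kappa^\tau_i$ and $\kappa^\tau_s$ really do compute $Q_{x_0}(\tau)$ at a second-order Alexandrov point. The subtle issue is quantifier order --- the corollary requires a \emph{common} full-measure set of $x$ on which $\kappa^\tau(x) = 0$ holds simultaneously for \emph{every} $\tau$, not merely a set depending on $\tau$. Fortunately Alexandrov's expansion is isotropic at each single point where it exists, so once the osculating-circle computation is executed uniformly in $\tau$, the uniform-in-$\tau$ vanishing and the umbilicity come out together.
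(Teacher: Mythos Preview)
Your argument is correct and is precisely the route the paper indicates: the corollary is not proved in the text but is attributed to Zamfirescu, with the remark that it follows by combining the preceding dichotomy theorem with ``a theorem due to Alexandrov''---exactly the Alexandrov second-differentiability theorem you invoke. Your handling of the quantifier issue (a single full-measure set of Alexandrov points on which the finite second-order expansion holds in all directions at once, forcing the $\kappa^\tau_s=\infty$ alternative to fail uniformly in $\tau$) is the right way to extract umbilicity, and matches what is implicit in the cited source.
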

On the other hand, Gruber found out that
\begin{theorem}\emph{(Gruber, \cite{G})}
\label{Gruber}
For typical convex bodies $K$, $\bd{K}$ is not $C^2$.
\end{theorem}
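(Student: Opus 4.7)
The strategy is to exhibit $\{K \in \B : \bd{K} \text{ is } C^2\}$ as a countable union of nowhere dense closed sets in $(\B,\dP)$. For each integer $n \geq 1$ I introduce
\[
A_n = \{K \in \B : \text{for every } x \in \bd{K} \text{ there exists a ball } B \subseteq K \text{ of radius } 1/n \text{ with } x \in \bd{B}\},
\]
the class of bodies satisfying a uniform inner rolling-ball condition. Because a $C^2$ convex hypersurface is compact with continuous principal curvatures, those curvatures are bounded by some constant $c$, and an inscribed tangent ball of radius $1/c$ exists at each boundary point by elementary differential geometry. Consequently $\{K : \bd{K} \text{ is } C^2\} \subseteq \bigcup_{n \geq 1} A_n$, and it suffices to show that each $A_n$ is nowhere dense in $\B$.

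The first step is to verify that $A_n$ is closed in $(\B,\dP)$. Given $K_j \to K$ with each $K_j \in A_n$, any $x \in \bd{K}$ is the limit of boundary points $x_j \in \bd{K_j}$; for each $j$ pick an inscribed ball $B_j \subseteq K_j$ of radius $1/n$ with $x_j \in \bd{B_j}$. The balls $B_j$ are uniformly bounded, so a Blaschke-selection subsequence converges to a closed ball $B$ of radius $1/n$ with $B \subseteq K$ and $x \in B$. Since $x \in \bd{K}$ it cannot be interior to $B \subseteq K$, so $x \in \bd{B}$, and hence $K \in A_n$.

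The second step, that $A_n$ has empty interior, is immediate from the density of polytopes in $(\B,\dP)$: at any vertex of a polytope the tangent cone is strictly smaller than a halfspace, so no positive-radius ball contained in the polytope can have the vertex on its boundary, and hence no polytope lies in any $A_n$. Therefore every $\dP$-neighbourhood of every $K \in \B$ contains a polytope $P \notin A_n$, so $\B \setminus A_n$ is dense; combined with closedness this makes $A_n$ nowhere dense, and Baire's theorem (\ref{baire}) completes the argument. The only delicate point in this outline is the closedness step, which relies on the stability of the rolling-ball property under Hausdorff limits; the rest is a routine polyhedral-approximation argument in the spirit of Klee's theorem.
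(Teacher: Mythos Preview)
Your argument is correct. The decomposition via the uniform inner rolling-ball sets $A_n$ works: the containment $\{K:\bd{K}\in C^2\}\subseteq\bigcup_n A_n$ follows from boundedness of the principal curvatures on a compact $C^2$ hypersurface; closedness of $A_n$ is exactly the Blaschke-selection argument you sketch; and for the empty interior it is enough to note that at a vertex $v$ of a polytope every facet through $v$ forces its outward normal to equal $-(c-v)/|c-v|$ if $B(c,1/n)\subseteq P$ touches $v$, contradicting the existence of $d\geq 2$ linearly independent facet normals.

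The paper, however, does not prove this statement directly at all: it is quoted as Gruber's theorem and the only justification offered is the sentence ``This also follows from Corollary~\ref{exist}''. That corollary (Zamfirescu) says that on a typical convex body the curvature exists and equals $0$ at almost every boundary point in every tangent direction; if the boundary were $C^2$ the sectional curvatures would be continuous, hence identically zero, which is impossible for a compact convex hypersurface. So the paper's route is an immediate corollary of a much deeper Baire-category fact, while your route is a self-contained first-category argument in the spirit of Gruber's original proof. Your approach is more elementary and independent of the Alexandrov/Zamfirescu machinery; the paper's is shorter but presupposes Corollary~\ref{exist}.
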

This also follows from corollary \ref{exist}. More precisely, in \cite{nonex}, Zamfirescu states that:
\begin{theorem}\emph{(Zamfirescu, \cite{nonex})}
\label{nonex}
On typical $K\in \B$, in typical points $x$ on the boundary in all tangent directions $\tau$, both the lower 
curvature $\kappa^\tau_i (x)$ is zero and the upper curvature $\kappa^\tau_s (x)$ is $\infty$.\\
\end{theorem}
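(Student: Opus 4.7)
The plan is a double Baire category argument: first inside $\B$ to specify the typical $K$, then inside $\bd K$ to specify the typical $x$. For each $n\in\N$, define the sets
\[
G^A_n(K)=\{x\in\bd K:\forall\,\tau,\ \rho^\tau_s(x)>n\},\qquad G^B_n(K)=\{x\in\bd K:\forall\,\tau,\ \rho^\tau_i(x)<1/n\},
\]
which are open in $\bd K$ by a compactness argument on the tangent sphere at $x$, combined with the single-point-witness character of $\limsup$ (resp.\ $\liminf$). If I can show that for typical $K$ each $G^A_n(K)$ and $G^B_n(K)$ is dense in $\bd K$, then by Baire's theorem applied inside $\bd K$, the intersection $\bigcap_n G^A_n(K)\cap G^B_n(K)$ is residual in $\bd K$; and at each $x$ in this intersection, every tangent $\tau$ satisfies $\rho^\tau_s(x)=\infty$ (so $\kappa^\tau_i(x)=0$) and $\rho^\tau_i(x)=0$ (so $\kappa^\tau_s(x)=\infty$), which is exactly the statement.

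To secure density for typical $K$, I would switch to Baire category in $\B$. Fix $n$ and a rational ball $B\subset\R^d$, and consider
\[
\mathcal U^A_{n,B}=\{K\in\B:\bd K\cap B=\emptyset\ \text{or}\ G^A_n(K)\cap B\neq\emptyset\},
\]
with $\mathcal U^B_{n,B}$ defined dually. Openness in $\B$ follows from the openness of $G^A_n$ together with continuity of the boundary in the Pompeiu--Hausdorff metric $\dP$. Density proceeds by a local perturbation: given $K\in\B$ and $\epsilon>0$, I would truncate $K$ inside a tiny region of $B$ by a supporting hyperplane and re-attach a piece of a sphere of radius $R>n$ tangent to that hyperplane; the modified body $K'$ is convex, $\dP$-close to $K$, and at the apex $x$ of the new cap the boundary agrees locally with the sphere, so $r_z=R>n$ for all nearby $z$ in every direction, placing $x$ in $G^A_n(K')$. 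The dual construction for $\mathcal U^B_{n,B}$ glues in a sphere cap of radius $<1/n$, producing a point where $r_z<1/n$ in every direction.

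Intersecting these residual sets over $n\in\N$ and over a countable basis of rational balls $B$ gives a residual subset of $\B$. For any $K$ in this intersection, $G^A_n(K)$ and $G^B_n(K)$ meet every nonempty rational ball in $\bd K$, hence are dense in $\bd K$, yielding the conclusion.

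The main obstacle is the local perturbation in the density step: the spherical cap must be glued into $K$ while preserving convexity, containment in $B$, and $\dP$-smallness of the modification, and the radius relation at the apex must be controlled in \emph{every} tangent direction at once. The isotropy of a sphere cap is the key geometric feature that delivers uniform control across all $\tau$; but one must verify that chord endpoints lying outside the cap do not give anomalous $r_z$ values that spoil the uniformity — which is arranged by choosing the cap small enough that every $z$ relevant to the $\limsup$ (resp.\ $\liminf$) at the apex lies on the sphere.
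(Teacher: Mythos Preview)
The paper does not prove this theorem: it is quoted as a background result of Zamfirescu (reference \cite{nonex}) and no argument is supplied. So there is no ``paper's own proof'' to compare against; I can only assess your proposal on its merits, which do follow the spirit of Zamfirescu's original double Baire category scheme.

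The main gap is your claim that $G^A_n(K)$ and $G^B_n(K)$ are \emph{open} in $\bd K$. Your justification (``single-point-witness character of $\limsup$'') conflates $\sup$ with $\limsup$. The condition $\rho^\tau_s(x)>n$ reads: for \emph{every} $\epsilon>0$ there exists $z\in Q_\tau$ with $0<\lVert z-x\rVert<\epsilon$ and $r_z(x)>n$. This is a $\forall\epsilon\,\exists z$ condition, hence $G_\delta$ rather than open in $(x,\tau)$; a witness $z$ at some fixed scale for $x$ does not provide witnesses at all smaller scales for a perturbed basepoint $x'$. Concretely, for smooth convex $f$ on the line with $f(t)=t^2 g(t)$ and $g$ oscillating suitably, $\rho_s(0)=1/(2\liminf g)$ can exceed $n$ while nearby points $s>0$ have $\rho_s(s)=1/f''(s)$ dipping below $n$; so $\{\rho_s>n\}$ need not be open. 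The same issue applies to $G^B_n$, since $\liminf<1/n$ is likewise $\forall\epsilon\,\exists z$.

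The fix is standard and does not change your architecture: replace $G^A_n$ by the doubly indexed open sets
\[
\widehat G^A_{n,m}(K)=\bigl\{x\in\bd K:\ \forall\,\tau\ \exists\,z\in Q_\tau,\ 0<\lVert z-x\rVert<\tfrac{1}{m},\ r_z(x)>n\bigr\},
\]
and similarly $\widehat G^B_{n,m}$. For smooth $K$ the inner condition is open in the unit tangent bundle (one explicit witness, continuous dependence of $r_z$ on $(x,z)$ and on the normal), and the fibrewise $\forall\tau$ passes to an open set on the base because projection along the compact tangent sphere is a closed map. Then $\bigcap_{n,m}\widehat G^A_{n,m}\cap\widehat G^B_{n,m}$ is the desired residual subset of $\bd K$. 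Your density step via localized spherical caps (or, more cleanly, via polytope approximation: facet-interior points give $\rho_s=\infty$, and tiny spherical roundings of vertices give $\rho_i$ arbitrarily small) then goes through with $\widehat G^{A/B}_{n,m}$ in place of $G^{A/B}_n$.
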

Now it is immediately clear from the above that if the curvature exists somewhere on a typical convex body, it can only be 0 or $\infty$. While corollary \ref{exist} asserts that curvature $0$ indeed is typical, one is bound to ask: What about curvature $\infty$? Some efforts have been made by Zamfirescu, who was able to show some indications that indeed, $\infty$ is a typical value for the curvature of the convex body. First, corollary \ref{exist} implies the following theorem. 
\begin{theorem}\emph{(Zamfirescu, \cite{curvprop})}
\label{plane}
Typical convex bodies in the Euclidean plane contain uncountably many boundary points in which the curvature exists and is infinite.
\end{theorem}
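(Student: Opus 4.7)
The plan is to deduce the theorem directly from Corollary \ref{exist} by invoking the classical theory of singular monotone functions. Let $K$ be a typical planar convex body; since smoothness, strict convexity, and the conclusion of Corollary \ref{exist} each hold residually, Lemma \ref{trans} lets me assume simultaneously that $K$ is strictly convex, $C^1$-smooth, and that the curvature of $\bd{K}$ exists and equals $0$ at arc length almost every boundary point. I would parametrize $\bd{K}$ by the outward normal angle, assigning to $\theta \in [0,2\pi)$ the unique boundary point $x(\theta)$ whose outward unit normal is $(\cos\theta,\sin\theta)$; by strict convexity and $C^1$-smoothness this is a continuous bijection. Write $s(\theta)$ for the arc length from $x(0)$ to $x(\theta)$, so that $s:[0,2\pi)\to[0,L)$ is continuous and strictly increasing.

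The core measure-theoretic step is that under this parametrization the curvature measure on $\bd{K}$ pulls back to the Lebesgue measure $d\theta$, while arc length pulls back to the Stieltjes measure $ds$. The vanishing of the pointwise curvature for arc length almost every point, supplied by Corollary \ref{exist}, is precisely the assertion that the curvature measure is purely singular with respect to arc length, which after pullback says that $ds$ is purely singular with respect to $d\theta$ on $[0,2\pi)$: there is a Borel set $N \subset [0,2\pi)$ of full Lebesgue measure whose Stieltjes $ds$-mass vanishes. Consequently $s(\theta)$ is a singular continuous monotone function, and by a classical theorem its ordinary derivative $s'(\theta_0)$ exists and equals $0$ at Lebesgue almost every $\theta_0 \in [0,2\pi)$.

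Next I would verify that every such $\theta_0$ yields a point $x_0 = x(\theta_0)$ at which the paper's curvature exists and equals $\infty$. Place $x_0$ at the origin with the tangent line along the $x$-axis and $K$ in the lower half plane; then $\bd{K}$ is locally a graph $y = -f(x)$ with $f$ convex and $f(0) = f'(0) = 0$. A direct calculation of the circle through $x_0$ and $z = (x_z,-f(x_z))$ with center on the inward normal ray gives $r_z = (x_z^2 + f(x_z)^2)/(2f(x_z))$, so $r_z \to 0$ is equivalent to $f(x_z)/x_z^2 \to \infty$. The asymptotic identities $\Delta s \sim x_z$ and $\Delta\theta \sim f'(x_z)$ translate the condition $s'(\theta_0) = 0$ into $f'(x_z)/x_z \to \infty$, and for convex $f$ with $f(0) = f'(0) = 0$ the bracketing $f(x_z)/x_z^2 \le f'(x_z)/x_z \le 4 f(2x_z)/(2x_z)^2$ makes these two asymptotic conditions equivalent. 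Running the argument separately for $x_z \to 0^+$ and $x_z \to 0^-$ forces $\kappa_i(x_0) = \kappa_s(x_0) = \infty$.

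Combining the two steps gives a set of parameters $\theta_0$ of full Lebesgue measure in $[0,2\pi)$ whose images under the continuous injection $\theta \mapsto x(\theta)$ form the desired uncountable family of boundary points carrying infinite curvature. The principal obstacle lies in the measure-theoretic translation in the second paragraph: one has to justify that the pointwise notion of curvature from the introduction agrees, arc length almost everywhere, with the Radon--Nikodym density of the curvature measure, so that Corollary \ref{exist} is really a statement about the singular character of $s(\theta)$. Once that identification is in hand, the rest reduces to one-variable calculus on convex graphs.
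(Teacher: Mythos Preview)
The paper does not actually give a proof of this theorem; it merely records it as a result of Zamfirescu from \cite{curvprop} together with the one-line remark that it follows from Corollary~\ref{exist}. Your proposal is a correct and essentially complete realization of precisely that implication: from $\kappa=0$ at arc-length almost every boundary point one concludes that $\theta(s)$ is a singular monotone function, hence so is its inverse $s(\theta)$, and at each of the Lebesgue-a.e.\ many $\theta_0$ with $s'(\theta_0)=0$ the local convex-graph computation forces $r_z\to 0$ from both sides. This singular-function route is the natural (and standard) way to extract the planar statement from Corollary~\ref{exist}, so your approach is in line with what the paper is alluding to.

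The ``principal obstacle'' you flag is in fact harmless here. In the plane, wherever the paper's circle-based curvature exists it agrees with the ordinary derivative $|\theta'(s)|$: your own bracketing $f(x)/x^2 \le f'(x)/x \le 4f(2x)/(2x)^2$ already shows that $\lim r_z^{-1}$ exists (and equals~$0$, resp.~$\infty$) exactly when $\lim f'(x)/x$ does, and the latter is $|\theta'(s)|$ up to the first-order identifications $\Delta s\sim x$, $\Delta\theta\sim f'(x)$. So Corollary~\ref{exist} gives $\theta'(s)=0$ for $ds$-a.e.\ $s$ directly, and the Lebesgue decomposition then yields that $d\theta$ (hence $ds$) is purely singular, without any appeal to a general Radon--Nikodym identification.
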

However, for dimensions higher than 2, this theorem is not easily generalized, because the proof Zamfirescu found did not work in higher dimensions, at least not conclusively. See \cite{curvprop} for a discussion of these results.
For example, because the lower Dupin indicatrix is convex, we can infer the following theorem from theorem \ref{plane}. 
\begin{theorem}
Typical convex bodies in $\R^d$ contain uncountably many boundary points in which the curvature exists and is infinite in all tangent directions except possibly in directions lying in a $d-2$-dimensional subspace of the tangent space.
\end{theorem}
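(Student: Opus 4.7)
The proof combines a convex-geometric spreading argument on the lower Dupin indicatrix with Theorem~\ref{plane} applied to a 2-plane section.

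At any smooth boundary point $x$ of $K\in\B$ (smoothness being typical by Theorem~\ref{Klee}), the lower Dupin indicatrix $I_i(x)\subset T_x$ in the $(d-1)$-dimensional tangent space $T_x$ to $\bd{K}$ at $x$, parameterized in direction $\tau$ by $\tau/\sqrt{\kappa^\tau_i(x)}$ with the convention that this is $0$ when $\kappa^\tau_i(x)=\infty$, is convex by the preceding hypothesis and, in a suitably symmetrized definition, centrally symmetric. If $\kappa^{\tau_0}_i(x)=\infty$ for even a single direction $\tau_0$, then $0\in\partial I_i(x)$; a supporting hyperplane $H_0$ at $0$ yields $I_i(x)\subseteq H_0^+$, and central symmetry forces $I_i(x)\subseteq H_0^+\cap H_0^-=H_0$, a linear subspace of $T_x$ of dimension $d-2$. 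For any $\tau\notin H_0$, the indicatrix point in direction $\tau$ must lie in $\{\lambda\tau:\lambda\geq 0\}\cap H_0=\{0\}$, giving $\kappa^\tau_i(x)=\infty$ and hence $\kappa^\tau(x)=\infty$ (using $\kappa^\tau_s\geq \kappa^\tau_i$).

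To produce uncountably many such $x$, fix a 2-plane $P_0\subset\R^d$. The restriction $\Phi:K\mapsto K\cap P_0$, defined on the open set $\{K\in\B:P_0\cap\intx{K}\neq\emptyset\}$ with values in the space of planar convex bodies, is $\dP$-continuous and, I claim, open: any small $\dP$-perturbation of $K\cap P_0$ lifts to a small perturbation of $K$ by thin-slab surgery around $P_0$. Open continuous maps pull residual sets back to residual sets; so by Theorem~\ref{plane}, typical $K\in\B$ has a section $K\cap P_0$ with uncountably many boundary points at which the planar curvature exists and equals $\infty$. For each such $x\in\partial(K\cap P_0)\subseteq\bd{K}$, strict convexity (typical by residuality of $\Bs$) forces $P_0\not\subseteq T_x$, so $\tau_x:=P_0\cap T_x$ is 1-dimensional, and the angle $\theta_x$ between $P_0$ and the normal plane spanned by $\tau_x$ and the inward normal $\nu_x$ is $<\pi/2$. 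Meusnier's theorem, applied to the $\liminf$ and $\limsup$ of radii in the paper's curvature definition, gives $\kappa^{\tau_x}_i(K,x)=\cos\theta_x\cdot\kappa^{\tau_x}_i(K\cap P_0,x)=\infty$; the first step then supplies the desired $(d-2)$-subspace conclusion at $x$.

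The principal obstacles are twofold. The openness of $\Phi$ must be carefully verified: the slab-surgery lift has to preserve convexity and control $\dP$-distance uniformly. Additionally, the central symmetry of $I_i(x)$ used in the first step is not automatic for $C^1$ convex bodies where $\kappa^\tau_i$ and $\kappa^{-\tau}_i$ can in principle disagree, so one must either work with a symmetrized indicatrix or verify symmetry at the specific points $x$ produced from planar sections. If openness of $\Phi$ proves elusive, a fall-back is to establish directly by a Baire-category argument in $\B$, modeled on Zamfirescu's planar proof from \cite{curvprop}, that ``some 2-planar section of $K$ carries uncountably many points of infinite curvature'' is residual.
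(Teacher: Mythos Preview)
The paper gives no detailed proof of this statement; it offers only the one-line justification ``because the lower Dupin indicatrix is convex, we can infer the following theorem from theorem~\ref{plane}.'' Your plan --- spread a single direction of infinite lower curvature to all directions outside a $(d-2)$-subspace via convexity of the lower Dupin indicatrix, and manufacture such points by applying the planar Theorem~\ref{plane} to a $2$-section --- is exactly the mechanism the paper is gesturing at, so the approaches agree.

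A remark on the two obstacles you flag. The symmetry issue is where the real content sits, and your diagnosis is accurate: convexity of $I_i$ alone, even with $\rho(\tau_0)=\rho(-\tau_0)=0$, does \emph{not} force $I_i$ into a hyperplane (a thin triangle with a vertex at the origin is a counterexample). The resolution is not to verify symmetry at the particular points produced, but to note that the convexity theorem for the lower Dupin indicatrix (as in Busemann's treatment, which the paper cites) is stated for the \emph{two-sided} normal-section curvature, so $I_i$ is centrally symmetric by definition; the paper's one-sided $Q$ is a minor inconsistency here. With that reading your first paragraph goes through cleanly. The openness of $K\mapsto K\cap P_0$ is true but your slab-surgery sketch needs care in codimension $>1$; a cleaner route is to observe that the polytopes meeting $\mathrm{int}\,P_0$ are dense and map onto a dense set of planar polytopes, and then use continuity, which suffices to pull back residual sets.
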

Thus, Zamfirescu asked, for example in \cite{curvprop}:
\begin{Problem}
Do typical convex bodies in Euclidean space of dimension greater than 2 possess points of existing and infinite curvature?
\end{Problem}
\section{Curvature and Cones}
In absence of smoothness, a cone can be used in an elementary fashion as an estimate the tangential cone of a surface. Without explaining how to do this, we will continue and use this idea for the issue of curvature. Alas, cones themselves are not suited for this matter. With a little adjustment, however, we will easily find more suitable sets for our needs.\\
Let $\S_{\e}(x),\ \e >0$ be the set of all points in our Euclidean space $\R^d$ with Euclidean distance $\e$ from $x\in \R^d$. Let $\tau$ be unit vector, let $\mu$ be the intrinsic metric of $\S_{\e} (x-\e\tau)$ (in the special case of the unit sphere $\S_1=\S_1(0)$ we will always use the angular metric). Additionally, let $\delta$ be some number in $(0,\frac{1}{2})$.\\ 
We define 
$$S_{\tau}(x,\epsilon,\delta)=\{y\in S_{\e}(x -\e \tau)| \mu(x,y)\leq \epsilon \delta \pi\}$$
and
$$C_{\tau}(x,\epsilon,\delta)=\bigcup_{K\in \B,\ S_{\tau}(x,\epsilon,\delta)\subset \bd{K}} K.$$
We call the above set a ''hat''. We say a subset $M$ of Euclidean Space has hat $C_{\tau}(x,\epsilon,\delta)$ iff there is a point $x\in M$ and a unit vector $\tau$ so that $M\subset C_{\tau}(x,\epsilon,\delta)$. This provides an incredibly easy way to estimate large curvature:
\begin{lemma}
Let $K\in\B$ be a convex body which has hat $C_{\tau}(x,\epsilon,\delta)$. Then $\kappa_i(x)\geq \frac{1}{\e}$, in other words: the curvature of the hat is majorized by the curvature of the convex body.
\end{lemma}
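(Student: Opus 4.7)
The plan is to show that $K$ is, in a small neighborhood $U_x$ of $x$, contained in the closed ball $\bar B_\epsilon(c)$ with $c := x - \epsilon \tau$, and then read off the curvature bound $r_z \leq \epsilon$ by a direct computation. To avoid a notational clash with the curvature definition I reserve $\tau$ for the hat direction and write $\sigma$ for the tangent direction used there.

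The central geometric step is to show that the hat is locally contained in $\bar B_\epsilon(c)$. Any convex body $K' \in \B$ with $S_\tau(x,\epsilon,\delta) \subset \bd{K'}$ must lie in each tangent half-space $H_p^- := \{y : \langle y - p,\, p-c\rangle \leq 0\}$ of the sphere at $p \in S_\tau$: at an interior cap point $p$, the only hyperplane through $p$ keeping a neighborhood of $p$ in $S_\tau$ on a single side is the sphere's tangent plane $T_p$ (any other hyperplane through $p$ has a nonvanishing linear part that cuts $S_\tau$ transversally), so $T_p$ is the unique supporting hyperplane of $K'$ at $p$ and convexity forces $K' \subset H_p^-$. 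Hence $C_\tau(x,\epsilon,\delta) \subset D := \bigcap_{p \in S_\tau} H_p^-$, so $K \subset D$, and in particular $x \in \bd{K}$ with $\tau$ an outward normal there. Writing a nearby point as $y = x + v$, the condition $y \in H_p^-$ for $p = c + \epsilon \nu$ becomes $\langle v + \epsilon \tau, \nu \rangle \leq \epsilon$; the maximum of the left-hand side over unit $\nu$ is attained at $\nu^* = (v+\epsilon\tau)/|v+\epsilon\tau|$, which tends to $\tau$ as $v \to 0$ and hence lies in the cap direction set (opening $\delta \pi > 0$) for $|v|$ small. Substituting $\nu^*$ yields $|v + \epsilon \tau| \leq \epsilon$, i.e., $y \in \bar B_\epsilon(c)$; so $K \cap U_x \subset \bar B_\epsilon(c) \cap U_x$ for some neighborhood $U_x$ of $x$.

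With this local containment, fix any tangent direction $\sigma \perp \tau$ and use $\nu = -\tau$ as the inward normal in the curvature definition. For $z = x + v \in \bd{K} \cap Q$ close enough to $x$, solving $|v + \lambda^* \tau|^2 = (\lambda^*)^2$ gives $r_z = \lambda^* = |v|^2/(-2 \langle v, \tau\rangle)$, while $z \in \bar B_\epsilon(c)$ yields $|v|^2 \leq -2\epsilon \langle v, \tau\rangle$; therefore $r_z \leq \epsilon$. Passing to $\limsup$ and then to the infimum over $\sigma$ gives $\kappa_i(x) \geq 1/\epsilon$.

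The hardest part is the local containment step, specifically the verification that the saturating direction $\nu^*$ stays inside the cap's opening $\delta \pi$ for $v$ sufficiently small; this is precisely where the positivity of $\delta$ is used, and it is also the only place in the argument where the precise shape of the cap, rather than merely the existence of the spherical piece, plays a role.
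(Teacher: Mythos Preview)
Your argument is correct. The paper does not actually supply a proof of this lemma; it is stated immediately after the definition of a hat with the remark that it ``provides an incredibly easy way to estimate large curvature,'' and is treated as self-evident. What you have written fills in exactly the details the paper omits: the hat $C_\tau(x,\epsilon,\delta)$ is, near its tip, contained in the closed ball of radius $\epsilon$ about $c=x-\epsilon\tau$ (because every convex body with the cap in its boundary lies in the tangent half-space at each interior cap point, and the intersection of those half-spaces agrees with the ball near $x$), and once $K$ is locally inside that ball the osculating-circle radius $r_z$ in any tangent direction is bounded by $\epsilon$ via the identity $r_z=|v|^2/(-2\langle v,\tau\rangle)$ together with $|v+\epsilon\tau|\le\epsilon$.

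Two small remarks that do not affect correctness. First, you use without comment that $x\in\bd{K}$ and that $-\tau$ is the inward normal there; this follows from your containment $K\subset H_x^-$, but since the paper's definition of $\kappa_i(x)$ presupposes a smooth point (a unique normal), it is worth saying explicitly that the conclusion is to be read under that standing assumption. Second, your observation that the positivity of $\delta$ is precisely what guarantees the maximising direction $\nu^*$ stays inside the cap is the right way to see why the lemma is genuinely about hats and not just about any set containing $x$; this is the one non-automatic point in an otherwise routine verification.
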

Note that the values which determine the hats are vital for this discussion. Thus we call $x$ the ''tip'', $\tau$ the ''direction'', $\e$ the ''radius'' and $\delta$ the ''angle'' of the hat.\\
The following theorem will show that the property of having a certain hat shows some continuity properties. 
\begin{theorem}
\label{cone}
Suppose $K\in\B$ has a hat $C_{\tau}(x,\epsilon,\delta)$, and let $\Delta$ be some real number smaller than $\delta$. Then there is an open neighborhood of $K$ in $\B$ so that every $K'$ in that neighborhood has hat $C_{\tau'}(x',\epsilon+\Delta,\delta-\Delta)$, where $\measuredangle(\tau',\tau)<\Delta$ and  $||x-x'||<\Delta$. In the latter inequality, $||\circ ||$ denotes the Euclidean norm.
\end{theorem}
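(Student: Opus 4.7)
The plan is to reduce the theorem to a geometric continuity statement about the hats, then combine it with Pompeiu--Hausdorff proximity. Two structural observations set things up. First, $C_\tau(x,\e,\delta)$ equals the intersection of all closed half-spaces $H_p$, where $p$ ranges over the cap $S_\tau(x,\e,\delta)$ and $H_p$ is bounded by the hyperplane tangent to $\S_\e(x-\e\tau)$ at $p$ while containing the sphere centre: any convex body with the cap on its boundary lies in each $H_p$, and conversely any point of $\bigcap_p H_p$ together with the cap spans a convex body with the cap on its boundary. Consequently $x$ is the unique point of $C_\tau(x,\e,\delta)$ on the tangent hyperplane $\{(q-x)\cdot\tau=0\}$, and hence the unique $\tau$-maximiser of $K$. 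Second, the family is monotone in the useful direction, $C_\tau(x,\e,\delta)\subset C_\tau(x,\e+\Delta,\delta-\Delta)$, each defining constraint gaining a slack $\Delta(1-\cos\theta)$ at angle $\theta$.

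With $K'$ close to $K$ in Pompeiu--Hausdorff distance $\eta$, the argument proceeds in two stages. First, I choose $x'$ and $\tau'$: the uniqueness of $K$'s $\tau$-maximiser together with continuity of support functions under Pompeiu--Hausdorff convergence yields a direction $\tau'$ arbitrarily close to $\tau$ for which $K'$ has a unique $\tau'$-maximiser $x'$ arbitrarily close to $x$, so that for $\eta$ small enough both $\|x-x'\|<\Delta$ and $\measuredangle(\tau,\tau')<\Delta$ hold. Second, I verify $K'\subset C_{\tau'}(x',\e+\Delta,\delta-\Delta)$ through the intersection-of-half-spaces description: the tangent constraint at the top is met by construction from the choice of $x'$ and $\tau'$, while the non-top constraints enjoy the slack $\Delta(1-\cos\theta)$ from monotonicity, which for $\eta$ small dominates the Pompeiu--Hausdorff perturbation uniformly away from the top.

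The hardest part is the second stage near $\theta=0$, where the available slack is only quadratic in $\theta$ while the Pompeiu--Hausdorff perturbation from $K$ to $K'$ is linear in $\eta$. The resolution exploits the rigidity of the original hat condition: the containment $K\subset C_\tau(x,\e,\delta)$ forces a quadratic profile on $K$ near $x$, in the sense that boundary points with horizontal displacement $u$ from $x$ have vertical depth of order at least $u^2/\e$, and this profile survives, in a controlled form, under Pompeiu--Hausdorff perturbation to $K'$ once $x',\tau'$ are aligned as above. The extra curvature budget furnished by passing from radius $\e$ to radius $\e+\Delta$ is then just enough to cover the perturbation and the perturbed profile simultaneously, which is what makes the small-$\theta$ constraints of the new hat close up.
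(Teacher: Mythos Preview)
Your overall plan is reasonable, but the choice of $x'$ as a $\tau'$-maximiser of $K'$ is where the argument breaks, and your third paragraph does not repair it. Consider the following family of perturbations in $\R^2$: let $K$ be the closed unit disc, so that $x=(0,1)$, $\tau=(0,1)$, $\e=1$, and fix any admissible $\Delta<\delta$. For small $\eta>0$ set $p=(\eta,\,1-\eta^{2}/3)$ and let $K'$ be the convex hull of $K$ and $p$. Then $\dP(K,K')=|p|-1\sim\eta^{2}/6$, so $K'$ lies in every prescribed neighbourhood of $K$ once $\eta$ is small. Both tangent points from $p$ to the unit circle have positive first coordinate (approximately $\eta(1\pm 1/\sqrt{3})$), so $x=(0,1)$ stays on $\bd{K'}$ and is the unique $\tau$-maximiser of $K'$; your recipe therefore returns $x'=x$, $\tau'=\tau$. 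But $p\in K'$ has horizontal displacement $\eta$ and vertical depth $\eta^{2}/3$ from $x$, whereas membership in $C_\tau(x,\e+\Delta,\delta-\Delta)$ near the tip requires depth at least $\eta^{2}/(2(\e+\Delta))$; since $\Delta<\tfrac12$ this exceeds $\eta^{2}/3$, so $K'\not\subset C_\tau(x,\e+\Delta,\delta-\Delta)$ for every $\eta>0$. Thus no choice of neighbourhood radius rescues the argument with this $x'$. In particular your claim that the quadratic profile ``survives'' under Hausdorff perturbation with enough margin is false: at horizontal scale $u\sim\eta$ the curvature budget from enlarging the radius is $u^{2}\Delta/(2\e(\e+\Delta))$, while a Hausdorff perturbation of order $\eta^{2}$ can erode the profile by a comparable amount, and the inequality goes the wrong way.

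The paper avoids this by an entirely different selection mechanism: it translates $K'$ along $\tau$ as far as possible inside the enlarged hat $C_\tau(x+\Delta\tau,\e+\Delta,\delta)$, takes a boundary contact point of the translate, and shows by an explicit estimate (with neighbourhood radius $\phi=\Delta\,\tfrac{1-\cos\Delta\pi}{3}$) that this contact must occur on the spherical cap within angular distance $\Delta$ of the apex; translating back gives the new tip and direction. In the example above this procedure correctly picks the tip near $p$, not near $x$. If you wish to salvage your half-space approach, you would need a genuinely different rule for selecting $(x',\tau')$ --- for instance, precisely the translation-and-contact device the paper uses --- and then your half-space bookkeeping could replace the paper's inequality chain; but as written, the crucial step is missing.
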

\begin{proof}
This theorem was stated in a slightly more special form in \cite{lcurv}. Unfortunately, the form stated there misses our needs by just an inch.
Let $\phi:=\Delta \frac{1-\cos \Delta\pi}{3}$. We will show that $B(K,\phi)$, which denotes the open ball (in the Pompeiu-Hausdorff metric) around $K$ of radius $\phi$, has some of the desired properties. Let $K'\in B(K,\phi)$ be arbitrary, and set $x^+=x+\Delta\tau$.
Define
$$\alpha_0:= \sup\{\alpha| K'+\alpha\tau\subset C_{\tau}(x^+ ,\Delta+\epsilon,\delta)\} $$
and 
$$x' \in K'+\alpha_0\tau \cap \bd{C_{\tau}(x^+,\Delta+\epsilon,\delta)}.$$
Let us suppose that
$$x'\in\bd{C_{\tau}(x^+ ,\Delta+\epsilon,\delta)}\setminus \relintx{S_{\tau}(x^+, \Delta+\e,\Delta)}.$$
Now obviously $\alpha_0$: $x'-\alpha_0\tau \in K'$, and this implies
$$\d(x^+,C_{\tau}(x,\e,\delta)+B(0,\phi))=\Delta-\phi$$
which in turn implies
$$\alpha_0\geq\frac{\Delta-\phi}{\cos{\Delta\pi}}.$$
On the other hand, let $x''$ be the point in $K'$ nearest to $x$, and let $\alpha'$ be the smallest real number so that $x''+\alpha'\tau$ lies in $\bd{C_{\tau}(x^+,\rho+\e,\delta)}$.
Now by definition
$$\alpha_0\leq\alpha',$$
but also
$$\alpha'\leq \Delta+\phi.$$
Putting these together we get
$$\frac{\Delta-\Delta\frac{1-\cos{\Delta\pi}}{3}}{\cos{\Delta\pi}}\leq \Delta+\Delta\frac{1-\cos{\Delta\pi}}{3}$$
which in turn is equivalent to
$$(\cos\Delta\pi)^2-3\cos{\Delta\pi}+2\leq0.$$
But since $\Delta<\delta<\frac{1}{2}$, this inequality cannot be right.\\
Thus $x'\in\bd{C_{\tau}(x^+ ,\Delta+\epsilon,\delta)}\setminus \relintx{S_{\tau}(x^+, \Delta+\e,\Delta)}$ is wrong, which in turn yields $x'\in \relintx{S_{\tau}(x^+, \Delta+\e,\Delta)}$.
This implies that $K'$ has hat $C_{\tau'}(x'-\alpha_0 \tau, \Delta+\e,\delta-\Delta)$, where $\measuredangle(\tau',\tau)<\Delta$. Now since $||x-(x'-\alpha_0 \tau)||<\phi+\Delta(\e+\Delta)\pi$, proper adjustment of $\Delta$ gives the final inequality. \end{proof}
We will now turn to some fairly easy lemmas, which we will state without proof.
\section{Some lemmata}
First, suppose $K$ is some strictly convex body, and $\tau$ is some unit vector. Let $x_\tau$ be the unique point in which $K$ has inward normal $\tau$, and let $\e>0,\ \delta\in(0,\frac{1}{2})$ be some real numbers. Of course, $C_\tau(x_\tau,\e,\delta)$ needn't be a hat of $K$, but for some $\alpha\geq 0$, $K \subset C_\tau(x_\tau,\e,\delta)-\alpha\tau$. Let $\alpha_0$ be the smallest such number. We define a function, the ''curvature indicator'':
$$\mathfrak{K}(\epsilon,\delta,K,\tau):\mathbb{R}_{>0}\times (0,\frac{1}{2})\times \Bs\times \S_1 (0) \rightarrow \mathbb{R}_{\geq 0}$$
$$\mathfrak{K}(\epsilon,\delta,K,\tau):=\alpha_0.$$
We write $\mathfrak{K}_{\epsilon,\delta}(K,\tau)$ instead of $\mathfrak{K}(\epsilon,\delta,K,\tau)$ if we want to indicate that $\e$ and $\delta$ are constant parameters, likewise we will write $\mathfrak{K}_{\epsilon,\delta,K}(\tau)$, if $K$ is to be constant, too.
\begin{lemma}
\label{conti} The curvature indicator fulfills the following properties: 
\begin{enumerate}
\item $\mathfrak{K}(\epsilon,\delta,K,\tau)\geq0$
\item $\mathfrak{K}(\epsilon,\delta,K,\tau)=0\Leftrightarrow$ $K$ has the hat $C_{-\tau}(x_\tau,\epsilon,\delta)$
\item Let $\e>0,\ \delta\in (0,\frac{1}{2})$ be some real numbers.
$\mathfrak{K}_{\e,\delta}(K,\tau)$ is continuous as a function from $\Bs\times \S_1(0)$ to $\mathbb{R}_{\geq 0}$
\end{enumerate}
\end{lemma}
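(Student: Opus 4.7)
Items (1) and (2) unpack the definition. For (1), $\mathfrak{K}$ is defined as the infimum of a set of nonnegative reals; the infimum is attained because the translated hat $C_\tau(x_\tau,\e,\delta)-\alpha\tau$ is closed in $\R^d$ and depends closed-continuously on $\alpha$, so the inclusion $K\subset C_\tau(x_\tau,\e,\delta)-\alpha\tau$ persists under decreasing limits of admissible $\alpha$. For (2), $\mathfrak{K}(\e,\delta,K,\tau)=0$ says that the shift $\alpha_0=0$ already works, i.e.\ $K$ lies in the un-translated hat at $x_\tau$; written in the sign convention of the statement (the hat opens \emph{into} $K$ at $x_\tau$, so that the appropriate direction at the tip is the outward normal $-\tau$), this is precisely ``$K$ has the hat $C_{-\tau}(x_\tau,\e,\delta)$.''

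For (3), I would first prove that $(K,\tau)\mapsto x_\tau$ is continuous on $\Bs\times\S_1(0)$: if $(K_n,\tau_n)\to(K_0,\tau_0)$, any subsequential limit $y$ of $(x_{\tau_n})$ must be a point of $\bd{K_0}$ admitting $\tau_0$ as inward normal, and strict convexity of $K_0\in\Bs$ forces $y=x_{\tau_0}$, so the full sequence converges. With this in hand, I would split continuity of $\mathfrak{K}_{\e,\delta}$ into upper and lower semicontinuity.

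\emph{Upper semicontinuity:} fix $(K_0,\tau_0)$ with $\alpha_0:=\mathfrak{K}_{\e,\delta}(K_0,\tau_0)$ and let $\eta>0$. The inclusion $K_0\subset C_{\tau_0}(x_{\tau_0},\e,\delta)-\alpha_0\tau_0$ is robust under a slight inflation of the shift, by an elementary geometric estimate in the style of Theorem~\ref{cone}: combining the $\dP$-closeness of $K$ to $K_0$ with the closeness of $\tau,\tau_0$ and of $x_\tau,x_{\tau_0}$, one gets $K\subset C_\tau(x_\tau,\e,\delta)-(\alpha_0+\eta)\tau$ for every $(K,\tau)$ in a small neighborhood of $(K_0,\tau_0)$, so $\mathfrak{K}_{\e,\delta}(K,\tau)\le\alpha_0+\eta$.

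\emph{Lower semicontinuity:} argue by contradiction. Assume a sequence $(K_n,\tau_n)\to(K_0,\tau_0)$ with $\mathfrak{K}_{\e,\delta}(K_n,\tau_n)\le\alpha_0-\eta$ for some fixed $\eta>0$, so that each $K_n\subset C_{\tau_n}(x_{\tau_n},\e,\delta)-(\alpha_0-\eta)\tau_n$. All $K_n$ lie in a common bounded region (by $\dP$-convergence), on which the translated hats converge in Pompeiu--Hausdorff distance to $C_{\tau_0}(x_{\tau_0},\e,\delta)-(\alpha_0-\eta)\tau_0$ by continuous dependence of the hat on its defining parameters; passing to the limit in the inclusion yields $K_0\subset C_{\tau_0}(x_{\tau_0},\e,\delta)-(\alpha_0-\eta)\tau_0$, contradicting the minimality of $\alpha_0$. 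The main technical point throughout is the unboundedness of $C_\tau(x,\e,\delta)$, which forces all continuity statements about hats to be phrased in terms of intersections with a fixed sufficiently large ball containing the bodies of interest; once that is in place, the geometric arguments go through cleanly.
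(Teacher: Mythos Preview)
The paper explicitly declines to prove this lemma: immediately after Theorem~\ref{cone} it says ``We will now turn to some fairly easy lemmas, which we will state without proof,'' and Lemma~\ref{conti} is the first of these. So there is no proof in the paper to compare against.

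Your argument is sound and at the right level of detail for what the author clearly regards as routine. In particular: your handling of the sign discrepancy between the definition of $\mathfrak{K}$ (which uses $C_\tau(x_\tau,\epsilon,\delta)-\alpha\tau$) and property~(II) (which refers to $C_{-\tau}(x_\tau,\epsilon,\delta)$) is appropriate---this is a genuine inconsistency in the paper's notation that you resolve correctly by appealing to the intended geometry. The continuity of $(K,\tau)\mapsto x_\tau$ via strict convexity is the right first step, and splitting (III) into upper and lower semicontinuity is the natural strategy. Your lower semicontinuity argument (pass to the limit in the inclusion, using local Pompeiu--Hausdorff convergence of hats on a fixed bounded region) is clean. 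The upper semicontinuity step is the only place where one might want a bit more: translated hats are not nested in $\alpha$, so ``inflating the shift'' does not automatically give containment with margin. But your appeal to a Theorem~\ref{cone}--style estimate is exactly right here---that theorem is precisely the robustness statement needed, and adapting it to the translated setting is straightforward. Your remark about intersecting with a large ball to handle the unboundedness of the hat is also a point the paper glosses over entirely.
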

\bigskip
\bigskip
We now assert a simple geometrical lemma.\\
Let $\epsilon_i$, $\delta_i,\ i\in \I$ be real numbers, agreeing with the definition of hats. Also, for the rest of the section define $x$ to be some point in Euclidean space, and $\tau$ some unit vector.\\ 
We set $C_{\tau}(x,(\epsilon_i),(\delta_i))_\I:=\bigcap_{i\in \I} C_{\tau}(x,\epsilon_i,\delta_i)$. Now we formulate a simple lemma. 

\begin{lemma}
\label{schacht} Consider a finite set $\I$ of natural numbers (a set of indices). Let $(\epsilon_i)$,$(\epsilon'_i)$ be two sequences of real numbers, monotonically decreasing and greater than 0, which fulfill $\epsilon_i>\epsilon'_i$ for all $i \in \I$. Further let  $(\delta_i)$,$(\delta'_i)$ be monotonically decreasing sequences in $(0,\frac{1}{2})$ which fulfill $\delta'_i\geq\delta_i$ for all  $ i\in \I$.
Then the following hold: 
\begin{itemize}
\item{i)}
$$C_{\tau}(x,(\epsilon'_i),(\delta'_i))_\I\subset C_{\tau}(x,(\epsilon_i),(\delta_i))_\I.$$
\item{ii)} Let $U$ be some open neighborhood of $x$.
Then $\bd{C_{\tau}(x,(\epsilon'_i),(\delta'_i))_\I}\setminus U$ and $\bd{C_{\tau}(x,(\epsilon_i),(\delta_i))_\I}$ have positive Euclidean distance.\\
Analogously, $\bd{C_{\tau}(x,(\epsilon_i),(\delta_i))_\I}\setminus U$ and $\bd{C_{\tau}(x,(\epsilon'_i),(\delta'_i))_\I}$ have positive Euclidean distance from each other.
\end{itemize}
\end{lemma}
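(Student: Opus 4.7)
The plan is to prove (i) by describing each hat as an intersection of supporting half-spaces parametrised by the angular position along the cap, and then to deduce (ii) by combining the resulting strict nesting with compactness on a large truncation and an explicit asymptotic comparison of the two conical tails.

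The central tool is the identity
\[
C_{\tau}(x,\epsilon,\delta)\;=\;\bigcap_{p\in S_\tau(x,\epsilon,\delta)} H_p^-,
\]
where $H_p^-$ is the closed half-space bounded by the tangent hyperplane to $\S_{\epsilon}(x-\epsilon\tau)$ at $p$, on the side containing the ball $B_\epsilon(x-\epsilon\tau)$. The inclusion $\subseteq$ is immediate since that hyperplane supports every admissible $K$ at $p$; for $\supseteq$ I would observe that, for any $y$ in the intersection, the body $\mathrm{conv}(B_\epsilon(x-\epsilon\tau)\cup\{y\})$ is itself admissible and contains $y$. Parametrising cap points by the angular position $\alpha\in[0,\delta\pi]$ of the outward sphere normal $\nu_\alpha$ (which depends on the angular direction but not on $\epsilon$), the defining inequality of $H_p^-$ reads $y\cdot\nu_\alpha\leq\epsilon(1-\cos\alpha)$. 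For fixed $\alpha$ this half-space is monotonically increasing in $\epsilon$, so under the hypotheses $\epsilon'_i\leq\epsilon_i$ and $\delta'_i\geq\delta_i$ every constraint defining $C_\tau(x,\epsilon'_i,\delta'_i)$ at an angle $\alpha\leq\delta_i\pi$ is at least as tight as the corresponding one for $C_\tau(x,\epsilon_i,\delta_i)$, while the primed hat is cut out by additional constraints at $\alpha\in(\delta_i\pi,\delta'_i\pi]$. Intersecting the resulting inclusions $C_\tau(x,\epsilon'_i,\delta'_i)\subset C_\tau(x,\epsilon_i,\delta_i)$ over $i\in\I$ yields (i).

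For (ii), the same comparison shows that for every $y\in C_\tau(x,(\epsilon'_i),(\delta'_i))_\I\setminus\{x\}$ and every $i\in\I$ the margin $\epsilon_i(1-\cos\alpha)-y\cdot\nu_\alpha$ in the unprimed defining constraint is strictly positive on the whole compact parameter set $\alpha\in[0,\delta_i\pi]$: strict for $\alpha>0$ by the strict inequality $\epsilon_i>\epsilon'_i$, and strict at $\alpha=0$ because any other $y$ with $y\cdot\tau=0$ would, for some transverse direction, violate a small-$\alpha$ constraint of one of the primed hats (the cap spreads angularly in every transverse direction). Consequently the primed and unprimed boundaries meet only at $x$. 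Next I would fix the neighbourhood $U$ of $x$ and choose $R>0$ so large that the piece $\bd{C_\tau(x,(\epsilon'_i),(\delta'_i))_\I}\setminus B_R(x)$ lies entirely on the conical tails of the primed hats; the truncation $\bd{C_\tau(x,(\epsilon'_i),(\delta'_i))_\I}\cap\overline{B_R(x)}\setminus U$ is then compact and disjoint from the closed set $\bd{C_\tau(x,(\epsilon_i),(\delta_i))_\I}$, hence separated from it by a strictly positive Euclidean distance. On the remaining tail, a direct calculation in the two-plane through $\tau$ and the outward normal under consideration shows that the primed and unprimed tangent cones either have strictly different half-angles — so that their perpendicular separation grows at least linearly in the distance from $x$ — or they share a half-angle, in which case they are parallel translates with perpendicular spacing $(\epsilon_j-\epsilon'_i)(1-\cos\delta_j\pi)>0$. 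Taking the minimum of both estimates gives the asserted positive distance, and the symmetric bound follows by swapping the roles of primed and unprimed.

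The hard part will be the bookkeeping in the intersection case: for a given boundary point of the primed intersection, identifying which constituent hat is locally active and pairing it against the appropriate unprimed constituent. Once this matching is set up, both the compact and the asymptotic estimates reduce to elementary one-variable trigonometric calculations in a single two-plane containing $\tau$.
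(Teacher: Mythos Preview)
The paper does not actually give a proof of this lemma: it is one of the ``fairly easy lemmas, which we will state without proof'' announced at the end of Section~2, and the text moves directly from the statement into Section~4. So there is nothing in the paper to compare your argument against, and your task reduces to producing a self-contained proof.

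Your approach is sound. The key identity
\[
C_{\tau}(x,\epsilon,\delta)=\bigcap_{p\in S_\tau(x,\epsilon,\delta)} H_p^-
\]
is correct; the $\supseteq$ direction via $K=\conv(\bar B_\epsilon(x-\epsilon\tau)\cup\{y\})$ works because every cap point $p$ lies on the supporting hyperplane $\partial H_p^-$ of this $K$, forcing $p\in\bd{K}$. Rewriting each half-space as $y\cdot\nu_\alpha\le\epsilon(1-\cos\alpha)$ immediately gives the monotonicity in $\epsilon$ and $\delta$ you need, and part~(i) follows cleanly by intersecting over $\I$.

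For part~(ii) your three-step outline (boundaries meet only at $x$; compactness on $\bar B_R(x)\setminus U$; explicit cone comparison on the tail) is the natural route and will go through. Two places to tighten:
\begin{itemize}
\item Your handling of the case $\alpha=0$ is correct but stated obliquely. The clean way to say it: the hat is contained in $\{y\cdot\tau\le 0\}$, and its intersection with $\{y\cdot\tau=0\}$ is exactly $\{x\}$, since for $y\perp\tau$ with $y\neq 0$ the constraint $|y|\sin\alpha\le\epsilon'(1-\cos\alpha)$ fails for small $\alpha>0$. Hence any $y\neq x$ in the primed set already satisfies $y\cdot\tau<0$, which is the only unprimed constraint not made strict by $\epsilon_i>\epsilon_i'$.
\item In the tail estimate, the active cone for the intersection $C_\tau(x,(\epsilon_i),(\delta_i))_\I$ is determined by the index with the largest $\delta_i$ (and, in case of ties, the smallest $\epsilon_i$); by monotonicity of the sequences this is a well-defined single index, and the same identification works on the primed side. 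Your formula $(\epsilon_j-\epsilon'_i)(1-\cos\delta_j\pi)$ should be stated with these matching active indices, not with unrelated $i,j$. Once this is pinned down, the dichotomy ``different half-angle $\Rightarrow$ linear separation'' versus ``equal half-angle $\Rightarrow$ constant perpendicular offset $(\epsilon-\epsilon')(1-\cos\delta\pi)>0$'' is exactly right.
\end{itemize}
With these clarifications the argument is complete; the ``bookkeeping'' you flag at the end is mild.
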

\section{Indicators and infinite curvature}
We will define in this section the ''indicators'' used to verify infinite curvature, and state a lemma that will imply the main theorem. The proofs are largely postponed to the next section.\\
\\
Let $(a_n)$, for the rest of this paper, be an arbitrary strictly monotonic decreasing sequence. 
For some set of indices $\I$ and some natural number $n$ define $\I(n)$ to be the $n$th element of the canonically ordered set $\I$. If no such element exists, that is, if $\I$ has less than $n$ elements, we set $\I(n)=\infty$.\\
Now, for some convex body $K$ and the sequence $(a_n)$, we define a special kind of index set, the ''maximal indicator''. As the name suggests, this indicator will give us a reasonable estimate how large something (here: the curvature) gets.
The first element of this index set, $\I_{K,(a_n)}(1)$, will be the smallest natural number $i$ such that there exist a unit vector $\tau$ and a boundary point $x$ of $K$ such that $C_\tau(x,\frac{1}{i}, a_i)$ forms a hat for $K$.\\
Now, suppose we have found out the first $m$ elements of $\I_{K,(a_n)}$. We then define $\I_{K,(a_n)}(m+1)$ as the smallest natural number $j>\I_{K,(a_n)}(m)$ such that there exist a unit vector $\tau'$ and a boundary point $x'$ of $K$ such that for all $i$ in $\I_{K,(a_n)}$ and $j$ $C_{\tau'}(x',\frac{1}{i}, a_i)$ resp. $C_{\tau'}(x',\frac{1}{j}, a_j)$ is a hat of $K$. Note that these hats have their tip and direction in common.\\
Now, we call the cardinality of the set $\I_{K,(a_n)}$ the {\it order of curvature} of $K$.
Since $(a_n)$ is fixed for our needs, we write this order as a function of $K$:
$$\mathsf{K}(K)=\card(\I_{K,(a_n)}).$$
Obviously, curvature and order of curvature seldom coincide. However, the order provides an effective way to estimate curvature. To show why this is so, consider a strictly convex body $K$ which has infinite order of curvature:\\
Since the curvature indicator is continuous (lemma \ref{conti}), the sets 
\[
\mathfrak{K}^{-1}_{\iota,\ \mu ,\ ,K}(0),\ \iota=\frac{1}{\I_{K,(a_n)}(m)},\ \mu=a_{\I_{K,(a_n)}(m)}
\]
are closed nonempty subsets of the compact unit sphere $\S_1$ for every $m\in \I_{K,(a_n)}$. By construction, the intersection of any finite number of these sets is nonempty.\\
The classic Heine-Borel Theorem now implies that any infinite intersection of these sets is nonempty, in particular, the intersection over all indices in the maximal indicator of $K$. We have proved the following:
\begin{lemma}
\label{infty}
Let $K$ be a strictly convex body with smooth boundary. 
If $\mathsf{K}(K)=\infty$, then there exists a boundary point of $K$ in which the curvature exists in every tangent direction and is infinite.
\end{lemma}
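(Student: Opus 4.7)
The plan is to encode the hypothesis $\mathsf{K}(K)=\infty$ as a family of closed subsets of the compact sphere $\S_1$ with the finite intersection property, and then extract a common direction by compactness. For each $m$, write $i_m := \I_{K,(a_n)}(m)$ and set
\[
A_m \,:=\, \bigl\{\, \tau \in \S_1 \,:\, \mathfrak{K}_{1/i_m,\, a_{i_m},\, K}(\tau) = 0 \,\bigr\}.
\]
By lemma \ref{conti}(3) the map $\tau \mapsto \mathfrak{K}_{1/i_m, a_{i_m}, K}(\tau)$ is continuous on $\S_1$, so each $A_m$ is closed. Lemma \ref{conti}(2) then gives the interpretation: $\tau \in A_m$ iff $K$ carries the hat $C_{-\tau}(x_\tau, 1/i_m, a_{i_m})$ at the unique boundary point $x_\tau$ with inward normal $\tau$ (uniqueness by strict convexity).

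The only nontrivial step is verifying the finite intersection property, and this is exactly where the recursive construction of $\I_{K,(a_n)}$ pulls its weight. At the $(m{+}1)$-st stage the definition explicitly furnishes a common tip and a common unit direction $\sigma$ for which $C_{\sigma}(x, 1/i_j, a_{i_j})$ is a hat of $K$ for every $j \leq m+1$ \emph{simultaneously}; with the appropriate sign convention this single direction (namely $-\sigma$) lies in $A_1 \cap \cdots \cap A_{m+1}$. Because $\mathsf{K}(K)=\infty$, the family $\{A_m\}_{m \in \N}$ is an infinite collection of closed subsets of $\S_1$ with the finite intersection property, so Heine-Borel supplies some $\tau^\ast \in \bigcap_{m} A_m$.

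To finish, put $x^\ast := x_{\tau^\ast}$. By the interpretation above, $K$ has the hat $C_{-\tau^\ast}(x^\ast, 1/i_m, a_{i_m})$ for every $m$, and the elementary lemma from Section~2 (a hat of radius $\e$ forces $\kappa_i \geq 1/\e$ at its tip) then gives $\kappa_i(x^\ast) \geq i_m$ for all $m$; hence $\kappa_i(x^\ast) = \infty$. Since $\kappa_i(x^\ast) = \inf_\tau \kappa^\tau_i(x^\ast)$, this forces $\kappa^\tau_i(x^\ast) = \infty$ in every tangent direction $\tau$, and the trivial bound $\kappa^\tau_s \geq \kappa^\tau_i$ yields $\kappa^\tau(x^\ast) = \infty$ in all tangent directions, as required. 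The one bookkeeping point to be careful about is that the common tips produced at distinct finite stages need not coincide with one another; but once the limiting direction $\tau^\ast$ is fixed, strict convexity pins down $x_{\tau^\ast}$ as the unique boundary point with inward normal $\tau^\ast$, and the definition of $\mathfrak{K}$ forces this same point to serve as the tip at every stage, so the hats automatically nest around the single boundary point $x^\ast$. I expect this last compatibility check to be the only real subtlety.
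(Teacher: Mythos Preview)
Your proof is correct and follows exactly the paper's approach: the paper also defines the closed sets $\mathfrak{K}^{-1}_{1/i_m,\,a_{i_m},\,K}(0)\subset \S_1$, invokes the recursive construction of $\I_{K,(a_n)}$ for the finite intersection property, and applies Heine--Borel. Your write-up is in fact more detailed than the paper's, since you spell out the passage from the limiting direction $\tau^\ast$ to the single tip $x_{\tau^\ast}$ and the final curvature conclusion via the hat lemma, both of which the paper leaves implicit.
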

We conclude this section with an observation. Let us define the following total order:
$$\I\prec\I'\Leftrightarrow\ \exists i_0 \in \mathbb{N}\ \forall i<i_0, i\in \mathbb{N}:\I(i)=\I'(i)\wedge \I(i_0)<\I'(i_0).$$

Let $K$ be a strictly convex body. In this order $\I_{K,(a_n)}$ is the smallest index set $\I$ for which there is a unit vector $\tau$ and a boundary point $x$ of $K$ such that for all $m$ in $\I$, 
$C_\tau(x,\frac{1}{i}, a_i)$ is a hat for $K$.\\
\section{Final Steps}
Now, the following theorem seems reasonable:
\begin{lemma}
\label{final}
Typical convex bodies $K$ fulfill $\mathsf{K}(K)=\infty$
\end{lemma}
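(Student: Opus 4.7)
The plan is a Baire-category argument. For each $n \geq 1$, let
\[
U_n := \{K \in \Bs : \mathsf{K}(K) \geq n\}.
\]
I will argue that each $U_n$ is open and dense in the Baire space $\Bs$; then $\bigcap_n U_n = \{K : \mathsf{K}(K) = \infty\}$ is residual in $\Bs$ and, by Lemma~\ref{trans}, also residual in $\B$, which is the assertion of the lemma.

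For openness of $U_n$, take $K \in U_n$ witnessed by the first $n$ elements $i_1 < \cdots < i_n$ of $\I_{K,(a_n)}$ together with a common tip-direction pair $(\tau, x)$ producing the hats $C_\tau(x, 1/i_k, a_{i_k})$. Applying Theorem~\ref{cone} separately to each of the $n$ hats and intersecting the resulting Pompeiu--Hausdorff neighborhoods yields an open $V \ni K$ in which every $K'$ carries perturbed hats $C_{\tau'}(x', 1/i_k + \Delta, a_{i_k} - \Delta)$ at some common nearby pair $(\tau', x')$. The positive-distance separation statement of Lemma~\ref{schacht}(ii), combined with the strictness of the hat containment $K \subsetneq C_\tau(x, 1/i_k, a_{i_k})$ afforded by the strict convexity of $K$, then allows one to convert each perturbed hat back into the corresponding index-form hat $C_{\tau'}(x', 1/i_k, a_{i_k})$ for $K'$ (taking $\Delta$ small enough relative to the spacings in the discrete grids $\{1/i\}$ and $\{a_i\}$), and hence $\mathsf{K}(K') \geq n$, so $V \subset U_n$.

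For density, given $K \in \Bs$ and $\varepsilon > 0$, I construct $K' \in B(K, \varepsilon) \cap \Bs$ with $\mathsf{K}(K') = \infty$. First approximate $K$ within $\varepsilon/2$ by a strictly convex smooth body $K_0$ whose maximum curvature outside a chosen small attachment region is uniformly bounded by a prescribed small constant (such ``locally flat'' bodies form a dense subclass of $\Bs$). Then, at a boundary point $p$ of $K_0$ with outward normal $\tau_p$, attach at $x_0 := p + h\tau_p$ (with $h < \varepsilon/4$) a convex cusp-shaped region with upper profile $z = -c|y_\perp|^\alpha$, the exponent $\alpha \in (1,2)$ being chosen close enough to $1$ (depending on the fixed sequence $(a_n)$) that the cusp lies below the cap of $C_{\tau_p}(x_0, 1/i, a_i)$ throughout the cap's angular range, for every sufficiently large $i$; the relevant geometric inequality reduces to $(\sin a_i\pi)^{2-\alpha} \leq 2c\, i^{1-\alpha}$, which is satisfied for infinitely many $i$ once $2 - \alpha$ is small enough relative to the decay of $(a_n)$. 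Taking the convex hull with $K_0$ and smoothly rounding the attachment collar (with a small further perturbation to preserve strict convexity) produces $K' \in B(K, \varepsilon) \cap \Bs$ whose cusp tip $x_0$ is a point of infinite curvature at which the hatable index set at $(\tau_p, x_0)$ is infinite; the Heine--Borel argument underlying Lemma~\ref{infty} then gives $\mathsf{K}(K') = \infty$.

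The principal obstacle is ensuring that the greedy inductive construction of $\I_{K',(a_n)}$ is in fact captured by the cusp tip from its very first step. A priori, non-cusp boundary points of $K'$ could contribute hats at indices below the cusp's hatable threshold, causing the greedy indicator to commit to a finite non-cusp sequence whose extension then fails. The preparatory flattening step addresses exactly this: by making the maximum curvature of $K_0$ outside the attachment region uniformly bounded below the smallest index at which the cusp is hatable, no non-cusp boundary point of $K'$ realizes a hat in the relevant index range, so the greedy indicator is forced to select the cusp tip from $\I_{K'}(1)$ onward and to extend along it indefinitely, yielding $\mathsf{K}(K') = \infty$ and completing the density step.
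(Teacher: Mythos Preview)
Your openness argument has a genuine gap. You show that each $K'$ near $K$ carries a common-tip family of hats at the original indices $i_1<\cdots<i_n$, but this does \emph{not} yield $\mathsf{K}(K')\geq n$. The quantity $\mathsf{K}$ is defined through a greedy procedure: $\I_{K'}(1)$ is the \emph{smallest} index at which $K'$ has any hat at all. Your perturbation argument does nothing to rule out that $K'$ acquires a new hat at some $i_0<i_1$; if it does, the greedy indicator commits to $i_0$ first, and there is no reason the set $\{i_0\}$ should extend to a hatable family of size $n$. (A side issue: the ``conversion'' step is backwards. Theorem~\ref{cone} produces a hat of radius $1/i_k+\Delta$ and angle $a_{i_k}-\Delta$, which by Lemma~\ref{schacht}(i) \emph{contains} $C_{\tau'}(x',1/i_k,a_{i_k})$; containment of $K'$ in the larger hat does not give containment in the smaller one.) Your density argument inherits the same difficulty: flattening $K_0$ does not prevent it from having hats at small indices---any bounded body has hats of large radius---so the greedy indicator on your $K'$ may start at a small index realized far from the cusp, and you have not shown that such a start can be continued.

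The paper circumvents both problems by not attempting to prove $U_n$ open, nor to construct in one stroke a body with $\mathsf{K}=\infty$. Instead it shows directly that $\{\mathsf{K}\leq m\}$ is nowhere dense: given $K$ with $\mathsf{K}(K)=m$, Lemma~\ref{cone2} Part~2 produces a neighbourhood on which no body has hats at indices lexicographically below $\I_K$ (this is the missing lower control on the greedy procedure); then a one-point spike $K_\theta=\conv(K\cup\{x+\theta\tau\})$ extends the hat family by one index, and Part~1 propagates this to an open set. The two-sided bound $\I_K\preceq\I_{K'}\preceq\I_K\cup\{j\}$ forces $\I_{K'}$ to agree with $\I_K$ in its first $m$ entries and to have an $(m{+}1)$st, giving $\mathsf{K}(K')\geq m+1$ on an open set.
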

As we have seen above in lemma \ref{infty}, this immediately implies theorem \ref{main}.
Thus, we need to prove lemma \ref{final}, which in turn needs a lemma.\\
\begin{lemma}
\label{cone2}
Let $[n]:=\{1,2,3,4,...,n\}$ be given, and let $(\e_i),(\e_i'),(\delta_i),(\delta_i'),\ i\in [n]$ be monotonically decreasing positive sequences, where the sequences $(\e_i),(\e_i')$ are to be strictly monotonic. Further, let these sequences satisfy $\delta'_i<\delta_i< \frac{1}{2}$ and $\epsilon'_i>\epsilon_i$ for all $i\in [n]$. Then the following hold:
\begin{itemize}
\item{1.} Let $K\in \B$ be given so that there exist $x\in \bd{K}$ and a unit vector $\tau$ so that for all $i\in [n]$  $$C_{\tau}(x,\epsilon_i,\delta_i)$$ is a hat of $K$.\\ 
Then there exists an open neighborhood of $K$ so that for all $K'$ in this neighborhood there exist $x'\in \bd{K'}$ and a unit vector $\tau'$ so that all
 $$C_{\tau'}(x',\epsilon_i',\delta_i'),\ i\in [n]$$
are hats of $K$.
\item{2.} Let $K\in \Bs$ be a strictly convex body, so that for each unit vector $\tau$ and each $x\in \bd{K}$ there is an $i\in [n]$, so that
$$C_{\tau}(x,\epsilon_i,\delta_i)$$
is no hat of $K$. Then there exists an open neighborhood of $K$ such that every $K'$ fulfills the above property, that is: for each unit vector $\tau$ and each $x\in \bd{K}$ there is an $i\in [n]$, so that
$$C_{\tau}(x,\epsilon_i,\delta_i)$$
is no hat of $K$.
\end{itemize}
\end{lemma}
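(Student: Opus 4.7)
For Part~1, I would first fix $\Delta > 0$ small enough that $\epsilon_i+\Delta < \epsilon_i'$ and $\delta_i-\Delta > \delta_i'$ hold for every $i \in [n]$, with $\Delta < \delta_n$. The plan is to mimic the proof of Theorem~\ref{cone}, but applied to the ``super-hat''
$$H := C_{\tau}\bigl(x+\Delta\tau,\, (\epsilon_i+\Delta),\, (\delta_i)\bigr)_{[n]}$$
in place of a single hat. An elementary check shows that each enlarged sphere $S_{\tau}(x+\Delta\tau,\epsilon_i+\Delta,\delta_i)$ has the same center as its unenlarged counterpart, with radius merely increased by $\Delta$, so all of them are tangent to the common hyperplane perpendicular to $\tau$ at $x+\Delta\tau$, and a single-index instance of Lemma~\ref{schacht}(i) ensures $K \subset H-\Delta\tau$ and thus that a sufficiently small Pompeiu-Hausdorff ball around $K$ still lies in $H$ after shifting along $\tau$. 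For $K'$ in that ball, set
$$\alpha_0 := \sup\{\alpha \geq 0 : K'+\alpha\tau \subset H\}$$
and pick $x^{\ast} \in (K'+\alpha_0\tau) \cap \bd{H}$. The boundary of $H$ near $x^{\ast}$ coincides with the boundary of a single enlarged hat $C_{\tau}(x+\Delta\tau,\epsilon_j+\Delta,\delta_j)$ for some index $j$, and the very trigonometric estimate of Theorem~\ref{cone} applied to that sphere forces $x^{\ast} \in \relintx{S_{\tau}(x+\Delta\tau,\epsilon_j+\Delta,\Delta)}$. Setting $x' := x^{\ast} - \alpha_0\tau \in \bd{K'}$ and choosing $\tau'$ as in Theorem~\ref{cone} (so $\measuredangle(\tau',\tau)<\Delta$), the inclusion $K'+\alpha_0\tau \subset H$ yields $K' \subset C_{\tau'}(x',\epsilon_i+\Delta,\delta_i-\Delta)$ for every $i \in [n]$, the $\Delta$-rotation of $\tau$ being absorbed into the angular parameter. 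A final application of Lemma~\ref{schacht}(i) then upgrades each of these to $K' \subset C_{\tau'}(x',\epsilon_i',\delta_i')$.

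For Part~2, I would argue by compactness and contradiction. Suppose no such neighborhood exists; then there are $K_m \to K$ in $\B$ and pairs $(x_m,\tau_m) \in \bd{K_m} \times \S_1(0)$ with $C_{\tau_m}(x_m,\epsilon_i,\delta_i)$ a hat of $K_m$ for every $m$ and every $i \in [n]$. The points $x_m$ lie in a uniformly bounded region, so after passing to a subsequence we may take $x_m \to x^{\ast}$ and $\tau_m \to \tau^{\ast}$; the standard properties of Pompeiu-Hausdorff convergence put $x^{\ast} \in \bd{K}$. Since $(y,\nu)\mapsto C_{\nu}(y,\epsilon_i,\delta_i)$ depends continuously on $(y,\nu)$ and since inclusions of closed sets pass to the Pompeiu-Hausdorff limit, one obtains $K \subset C_{\tau^{\ast}}(x^{\ast},\epsilon_i,\delta_i)$ for every $i \in [n]$; thus $(x^{\ast},\tau^{\ast})$ is a tip-direction pair at which all $n$ hats of $K$ hold, contradicting the hypothesis on $K$.

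The main obstacle lies in Part~1, specifically in producing a single tip $x'$ and direction $\tau'$ that serves all $n$ hats simultaneously: a naive application of Theorem~\ref{cone} to each individual hat would give potentially different pairs $(x_i',\tau_i')$, which is not enough. Running the construction on the super-hat $H$ circumvents this, exploiting that all enlarged spheres share a common tangent hyperplane at $x+\Delta\tau$, so $H$ admits a single Theorem~\ref{cone}-type construction that works for all $i$ at once.
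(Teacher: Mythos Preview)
Your Part~2 is correct and in fact simpler than the paper's argument: the paper goes through the curvature-indicator machinery, translating the hypothesis into $\min_{\tau\in\S_1}\sum_{i\in[n]}\mathfrak{K}_{\epsilon_i,\delta_i}(K,\tau)>0$ and then invoking continuity of $\mathfrak{K}$ (Lemma~\ref{conti}) plus compactness of $\S_1$. Your direct sequential-compactness argument bypasses this and reaches the same conclusion with less overhead.

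Your Part~1, however, has a genuine gap at the step ``the inclusion $K'+\alpha_0\tau\subset H$ yields $K'\subset C_{\tau'}(x',\epsilon_i+\Delta,\delta_i-\Delta)$ for every $i\in[n]$''. The contact point $x^\ast$ lies on the sphere of index $n$ only, so the natural $\tau'$ (the radial direction at $x^\ast$ on that sphere) is adapted to the $n$-th hat. For $i<n$ the translated hat $C_\tau(x+\Delta\tau-\alpha_0\tau,\epsilon_i+\Delta,\delta_i)$ has a \emph{different} sphere and a \emph{different} center; replacing its tip by $x'$ and its direction by $\tau'$ is a genuine perturbation of that hat, and the containment $K'\subset C_{\tau'}(x',\epsilon_i+\Delta,\delta_i-\Delta)$ does not follow merely from ``absorbing a $\Delta$-rotation into the angular parameter''. (If $K'$ were grazing the boundary of the $i$-th hat far from the tip, a small rigid motion of the hat could push that contact point outside.) This is precisely the obstacle you flag in your final paragraph, and the super-hat sliding does not by itself resolve it.

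The paper sidesteps this difficulty by \emph{not} re-running the sliding argument on the super-hat. Instead it applies Theorem~\ref{cone} only to the smallest hat (index~$n$), obtaining $x',\tau'$ with $\|x-x'\|,\measuredangle(\tau,\tau')<\eta$ and $K'\subset C_{\tau'}(x',\epsilon'_n,\delta'_n)$. The remaining indices are handled via Lemma~\ref{schacht}(ii): one first chooses $\phi$ so that $K+\overline{B}(0,\phi)$ stays a positive distance from $\bd{C_\tau(x,(\epsilon'_i),(\delta'_i))_{[n]}}\setminus\bd{C_\tau(x,\epsilon'_n,\delta'_n)}$, and then $\eta$ small enough that this separation survives replacing $(x,\tau)$ by any $(x',\tau')$ with $\|x-x'\|,\measuredangle(\tau,\tau')<\eta$. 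Then $K'\in B(K,\min(\phi,\phi'))$ is contained in $C_{\tau'}(x',\epsilon'_n,\delta'_n)$ \emph{and} disjoint from the rest of $\bd{C_{\tau'}(x',(\epsilon'_i),(\delta'_i))_{[n]}}$, forcing $K'\subset C_{\tau'}(x',(\epsilon'_i),(\delta'_i))_{[n]}$. The separation statement of Lemma~\ref{schacht}(ii), which you invoke only trivially, is the missing ingredient.
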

\begin{proof}
\emph{Part 1.} Let $K$ be as in the description of the lemma. Justified by lemma \ref{schacht}, we can find a $\phi>0$ so that $K+\overline{B}(0,\phi)$ has Euclidean distance at least $\phi$ from 
$$\bd{C_{\tau}(x,(\epsilon'_i),(\delta'_i))_{[n]}}\setminus \bd{C_{\tau}(x,\epsilon'_n,\delta'_n)}.$$
Now, find a $\eta>0$  with $\eta\leq\min(\delta_n-\delta_n',\e'_n-\e_n)$ such that for all unit vectors $\tau'$ with $\measuredangle(\tau,\tau')<\eta$ and all points $x'$ with $||x-x'||<\eta$
$$(K+\bar{B}(0,\phi))\cap\overline{\bd{C_{\tau'}(x',(\epsilon'_i),(\delta'_i))_{[n]}}\setminus \bd{C_{\tau'}(x',\epsilon'_n,\delta'_n)}}=\emptyset.$$
By theorem \ref{cone}, there is a $\phi'>0$ so that all $K'$  with Pompeiu-Hausdorff distance less than $\phi$ to $K$ have hat $C_{\tau'}(x',\epsilon_n+\eta,\delta_n-\eta)$ where $||x-x'||<\eta$ and $\measuredangle(\tau,\tau')<\eta$. 
With this choice of $\tau'$ and $x'$, we assert that not only $C_{\tau'}(x',\epsilon_n+\eta,\delta_n-\eta)$ is a hat, but also
$$C_{\tau'}(x',\epsilon'_n,\delta'_n).$$
Also, for any $K'\in B(K,\min(\phi,\phi'))$
$$K'\cap \overline{\bd{C_{\tau'}(x',(\epsilon'_i),(\delta'_i))_{[n]}}\setminus \bd{C_{\tau'}(x',\epsilon'_n,\delta'_n)}}=\emptyset$$
holds. But this implies 
$$K'\subset C_{\tau'}(x',(\epsilon'_i),(\delta'_i))_{[n]}$$
and thus for every $i\in[n]$,
$C_{\tau'}(x',\epsilon'_i,\delta'_i)$ is a hat of $K'\in B(K,\min(\phi,\phi'))$.\\
\emph{Part 2.}
Let $K$ be as in the assumptions of the second part of the lemma. This means, by lemma \ref{schacht}, that
$$\min_{\tau\in \S_1} \sum_{i\in [n]} \mathfrak{K}_{\epsilon_i,\delta_i} (K,\tau)>0.$$
Continuity of $\mathfrak{K}$ (lemma \ref{schacht}) and compactness of $\S_1$ imply that there is a $\phi>0$
such that for all strictly convex $K'$ in the $\phi$-ball around $K$
$$\min_{\tau\in \S_1} \sum_{i\in [n]} \mathfrak{K}_{\epsilon_i,\delta_i} (K',\tau)>0$$
which in turn, using lemma \ref{schacht}, implies the desired property.
\end{proof}
\begin{proof}[Proof of Lemma \ref{final}]
We restrict ourselves to strictly convex bodies, as justified by the transitivity of categories lemma \ref{trans}.
Let $K$ be such a strictly convex body with $\mathsf{K}(K)=m$, where $m$ is some natural number. We will show that an arbitrary open neighborhood of $K$ contains an open subset where $\mathsf{K}$ takes values larger than $m$.
Let $\tau$ be a unit vector (and $x$ the corresponding tip) coherent with the definition of the maximal indicator, that is, they are chosen so that for all $i\in \I_{K,(a_n)}$,
$$C_{\tau}(x,\frac{1}{i},a_i)$$
is a hat on $K$. Now let $B(K,\Phi)$ denote an arbitrary open ball around $K$.\\
Using lemma \ref{cone2}, Part 2. we find that there is a $\phi>0$ so that for all $K'$ in $B(K,\phi)$, the following holds with respect to the order of indices defined above:
$$\I_{K,(a_n)} \preceq \I_{K',(a_n)}.$$
Now take $\theta$ to be some real number between $0$ and $\frac{\min(\Phi,\phi)}{2}$, and define $x_\theta:=x+\tau\theta$ and $K_\theta:=\conv K\cup\{x_\theta\}$. Obviously, there is a $f\in (0,1)$ so that for all $i\in \I_{K,(a_n)}$,
$$C_{\tau}(x,\frac{f}{i},\frac{a_i}{f})$$
is a hat on $K_\theta$, and let $j>\I_{K,(a_n)}(m)$ be the smallest natural number so that 
$$C_{\tau}(x,\frac{f}{j},\frac{a_j}{f})$$
is a hat on $K_\theta$.
Now choose, justified by lemma \ref{cone2}, Part 1., a $\phi'>0$ so that for all $K'$ in $B(K_\theta,\phi')$ there exists a $x'\in \bd{K}$ and a unit vector $\tau'$ so that all
 $$C_{\tau'}(x',\epsilon_i',\delta_i'),\ i\in \I:=\I_{K,(a_n)}\cup\{j\}$$
form hats on $K'$.
Set $\phi^*=\min(\phi,\theta)$.
For all $K'\in B(K_\theta,\phi^*)$
$$\I_{K',(a_n)}\preceq \I$$
holds because $\phi^*\leq\phi$. But additionally, for these $K'$
$$\I_{K,(a_n)}\subset\I_{K',(a_n)}$$
because $\phi^*\leq\phi$.
But this implies 
$$\I_{K',(a_n)}(m+1)\leq\I(m+1)=j<\infty$$
an thus
$$\mathsf{K}(K')\geq m+1.$$.
\\
\\
Thus we have proved that for all natural numbers $m$, the set of those convex bodies with $\mathsf{K}(K)\leq m$ lies nowhere dense. This in turn proves lemma \ref{final}.
\end{proof}


\end{document}